\documentclass[11pt]{article}
\usepackage[T1]{fontenc}
\usepackage{lmodern}
\usepackage{amsmath,amssymb,amsthm,mathtools}
\usepackage[a4paper,margin=29mm]{geometry}
\usepackage{microtype}
\usepackage{enumitem}
\usepackage{booktabs,array}
\usepackage[colorlinks=true,linkcolor=blue!50!black,citecolor=blue!50!black,urlcolor=blue!50!black]{hyperref}
\usepackage{xcolor}
\usepackage{fancyhdr}
\newtheorem{theorem}{Theorem}[section]
\newtheorem{proposition}[theorem]{Proposition}
\newtheorem{lemma}[theorem]{Lemma}
\newtheorem{corollary}[theorem]{Corollary}
\theoremstyle{definition}\newtheorem{definition}[theorem]{Definition}
\newtheorem{example}[theorem]{Example}
\theoremstyle{remark}\newtheorem{remark}[theorem]{Remark}
\DeclareMathOperator{\Tor}{Tor}
\DeclareMathOperator{\fd}{fd}
\DeclareMathOperator{\Fwd}{Fwd}
\DeclareMathOperator{\wgd}{w.gl.dim}
\DeclareMathOperator{\Ann}{Ann}
\DeclareMathOperator{\Nil}{Nil}

\DeclareMathOperator{\Max}{Max}
\DeclareMathOperator{\Frac}{Frac}
\DeclareMathOperator{\im}{im}
\newcommand{\m}{\mathfrak m}
\newcommand{\p}{\mathfrak p}
\newcommand{\q}{\mathfrak q}
\newcommand{\kk}{\kappa}
\newcommand{\ot}{\otimes}
\newcommand{\colim}{\varinjlim}

\numberwithin{equation}{section}
\title{The finitistic weak dimensions of Gaussian rings}
\author{Xiaolei Zhang \thanks{School of Mathematics and Statistics, Tianshui Normal University
		Tianshui 741000, China;	E-mail: zxlrghj@163.com }}
\date{}
\begin{document}
\maketitle
\begin{abstract}
We prove that every commutative Gaussian ring has finitistic weak dimension at most two. The proof works with a fixed module of finite flat dimension and does not assume that the weak global dimension of the ring is finite. For a local Gaussian ring $A$, with nilradical $N$, the kernel $I$ of $A\to A_N$ satisfies $NI=I^2=0$. A hypothetical module $M$ of flat dimension three gives a nonzero torsion module $H=\Tor_2^A(A/N,M)$ over the valuation domain $A/N$, with $H\cong\Tor_1^A(I,M)$ and $I\otimes_{A/N}H=0$. Two carefully controlled localizations, a propagation argument in the chain ring $A/D$, where $D=\{x:x^2=0\}$, and an annihilator factorization in a filtered system of cyclic modules yield a contradiction. 
\end{abstract}
\noindent\textbf{Keywords.} Gaussian ring, finitistic weak dimension, flat dimension, valuation domain, localization, square-zero ideal.\\
\textbf{2020 Mathematics Subject Classification.}  13D05, 13F05.

\begin{abstract}
We prove that every commutative Gaussian ring has finitistic weak dimension at most two. The proof works with a fixed module of finite flat dimension and does not assume that the weak global dimension of the ring is finite. For a local Gaussian ring $A$, with nilradical $N$, the kernel $I$ of $A\to A_N$ satisfies $NI=I^2=0$. A hypothetical module $M$ of flat dimension three gives a nonzero torsion module $H=\Tor_2^A(A/N,M)$ over the valuation domain $A/N$, with $H\cong\Tor_1^A(I,M)$ and $I\otimes_{A/N}H=0$. Two carefully controlled localizations, a propagation argument in the chain ring $A/D$, where $D=\{x:x^2=0\}$, and an annihilator factorization in a filtered system of cyclic modules yield a contradiction. An explicit valuation-ring quotient realizes the bound two. We also identify the precise inputs from the structure theory of Gaussian rings and explain why passage to a square-zero quotient cannot replace the Tor arguments.
\end{abstract}
\maketitle

\setcounter{tocdepth}{1}

\section{Introduction}

For a commutative ring $R$, the weak global dimension measures the flat dimensions of all $R$-modules, whereas the finitistic weak dimension measures only the finite ones:
\[
 \Fwd(R)=\sup\{\fd_R M: M\text{ is an }R\text{-module and }\fd_R M<\infty\}.
\]
No finite generation or presentation condition is imposed on $M$. The distinction is essential for noncoherent rings. General background on finitistic dimensions, coherent rings, and homological algebra can be found in \cite{Bass,GlazBook,Weibel}.

A ring is Gaussian if the content of a product of polynomials is the product of their contents. Gaussian domains are precisely Pr\"ufer domains. In the presence of zero divisors, Gaussian rings need not be arithmetical or coherent. Their local structure nevertheless retains a chain-ring quotient and a valuation-domain quotient, which provide two different kinds of control over ideals.

Bazzoni and Glaz developed the relevant structure theory in \cite{BG}. Donadze and Thomas proved the weak-global-dimension trichotomy in \cite{DT}: a Gaussian ring has weak global dimension $0$, $1$, or $\infty$. This trichotomy does not by itself bound the finite flat dimensions of individual modules when the weak global dimension is infinite. Couchot proved the finitistic bound two for commutative arithmetical rings in \cite{Couchot12}, and subsequently for fqf-rings in \cite{Couchot15}. The following result gives the corresponding bound for Gaussian rings.

\begin{theorem}\label{thm:main}
Let $R$ be a commutative Gaussian ring. Every $R$-module of finite flat dimension has flat dimension at most two. Equivalently,
\[
 \Fwd(R)\leq 2.
\]
The bound can be attained by a local Gaussian total ring of quotients.
\end{theorem}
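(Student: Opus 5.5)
We argue by contradiction. First we reduce to the local case. Flat dimension is detected locally, since $\Tor^R_n(X,M)_\m\cong\Tor^{R_\m}_n(X_\m,M_\m)$, and localizations of Gaussian rings are Gaussian. It therefore suffices to show that for a local Gaussian ring $A$, every $A$-module with $\fd_A M<\infty$ has $\fd_A M\le 2$. Taking syzygies in a flat resolution, it is enough to rule out a module $M$ with $\fd_A M=3$ exactly. Indeed, if $\fd M=n\ge 3$, then the $(n-3)$-rd syzygy has flat dimension $3$. Throughout we fix such an $M$ and never assume anything about $\wgd A$.

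Next we set up the structure. Let $N=\Nil(A)$. By the Bazzoni--Glaz structure theory, $A/N$ is a valuation domain and $A_N$ is a local ring whose maximal ideal is nil. Let $I=\ker(A\to A_N)$. We would verify $NI=I^2=0$ from the Gaussian property, using that for local Gaussian rings every nilpotent element annihilates elements killed by non-nilpotents. Next we compare $M$ with its images over $A/N$ and $A_N$. Over the valuation domain $A/N$, torsion-freeness is flatness, so $\fd_{A/N}$ of anything is at most $1$. The change-of-rings spectral sequence, or a direct dimension shift along $0\to N\to A\to A/N\to 0$ together with $0\to I\to N\to N A_N\cap\cdots$, should then show that $\fd M=3$ forces $H=\Tor_2^A(A/N,M)\ne 0$. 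Using $I$ square-zero and killed by $N$, we also want the isomorphism $H\cong\Tor_1^A(I,M)$ and the vanishing $I\otimes_{A/N}H=0$. The point is that $H$ is a torsion $A/N$-module, because it dies after localizing at $N$, where $A_N$ has a nil maximal ideal and flat dimension behaves well.

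The heart of the argument is to derive a contradiction from a nonzero torsion module $H$ with $I\otimes_{A/N}H=0$. We would pick a nonzero element $h\in H$ and look at its annihilator $\Ann(h)$, a nonzero ideal of the valuation domain $A/N$. We then localize twice. The first localization is at the prime $\p$ minimal over $\Ann(h)$, or the appropriate prime of $A/N$ containing it, so that $h$ survives. The second is chosen so that $I$ becomes "divisible enough" relative to $\Ann(h)$. The aim is to arrange an element $a\in I$ whose image generates a copy of $A/\Ann$ compatible with $h$, so that $a\otimes h\neq 0$, which contradicts $I\otimes H=0$. To propagate nonvanishing we work in the chain ring $A/D$, where $D=\{x:x^2=0\}$, since Gaussian local rings have $A/D$ a chain ring. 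We would write $H$ (or $\Tor_1(I,M)$) as a filtered colimit of cyclic modules, and factor annihilators through this system, so that the nonzero element is detected at a finite stage.

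We expect the main obstacle to be this last step. Controlling the localizations so that both $h\ne0$ and the relevant part of $I$ survive, while keeping the annihilator factorization compatible, is exactly where a naive square-zero reduction fails. Finally, attainment of the bound: we take a valuation domain $V$ with a non-finitely generated ideal and a quotient $A=V/rJ$ (for instance $V/J^2$ type, or $V/rV$ with a suitable non-principal setup) that is a local Gaussian total ring of quotients. There we exhibit $M=V/J$ or a similar cyclic module, and compute $\Tor_2\ne0$ and $\Tor_3=0$ from an explicit flat resolution by colimits of principal ideals.
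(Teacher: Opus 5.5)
Your local reduction and your setup ($N$, $I=\ker(A\to A_N)$ with $NI=I^2=0$, $H=\Tor_2^A(A/N,M)\cong\Tor_1^A(I,M)$, $I\ot_{A/N}H=0$) follow the paper. Two inputs you treat as automatic are, however, real steps:
\begin{itemize}
\item That $H$ is torsion, and that $\Tor_3^A(A/N,M)=0$, needs $\fd_{A_N}M_{A_N}\le2$ and $\Tor_2^{A_N}(K,M_{A_N})=0$. This is the zero-dimensional case of the theorem, proved from the Donadze--Thomas modulewise lemmas. The sharp example is itself zero-dimensional with a module of flat dimension two, so a nil maximal ideal does not make flat dimension ``behave well''.
\item That $H\ne0$ needs the detection of some $a\notin N$ with $\Tor_3^A(V/\bar aV,M)\cong(0:_H\bar a)\ne0$.
\end{itemize}

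\textbf{The gap.} The genuine gap is at the heart of the argument. You propose to contradict $I\ot_V H=0$ by producing an elementary tensor $a\ot h\ne0$, but you give no mechanism, and nothing in the data you list forces one. The facts $NI=I^2=0$, $sI=s^2I$, and $H$ nonzero torsion are all compatible with $I$ being a divisible torsion $V$-module. In that case $a\ot h=a'\ot sh=0$ whenever $sh=0$ and $a=sa'$. Localizing cannot change this without a new homological input. Your stated aim for the second localization (making $I$ ``divisible enough'') points the wrong way. In the paper, $I\ot_V H=0$ is used only once, to obtain a divisibility $\xi=b\eta\ne0$ with $b\in\m\setminus N$. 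The contradiction itself is a vanishing statement for $\Tor_3$ of cyclic modules.

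\textbf{What the paper does instead.}
\begin{enumerate}
\item After the first localization one has (E): $\Tor_3^A(V/cV,M)\ne0$ for all $0\ne c\in\p$. By (E), every element of $\m$ is a zero divisor. Propagated through the chain ring $A/D$, (E) also excludes annihilators $(0:d)=aA+D$ with $0\ne d\in D$, $a\notin D$.
\item The Donadze--Thomas structure theory then gives $\m^2$ flat. Combined with highest residue-field Tor vanishing, this gives $\Tor_2^A(k,M)=0$, and $\p=\p^2$ is not principal.
\item From $\xi=b\eta$ and $H\cong\Tor_1^A(I,M)$ one extracts $0\ne d=b^2e$ with $e\in I$. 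A second localization, with a separate check that (E) survives, makes $d$ power-torsion under every element of $\m$.
\item This forces $(0:bu_2)\subsetneq(0:bu_1)$ whenever $bV\subseteq u_1V\subsetneq u_2V\subsetneq\p$. Any $\lambda\in(0:bu_1)\setminus(0:bu_2)$ lies in $I$ and satisfies $bu_1A+N\subseteq(0:\lambda)\subseteq bu_2A+N$.
\item Hence each transition map of the directed system $A/(buA+N)$ factors through $A/(0:\lambda)\cong\lambda A\hookrightarrow A$, and so is zero on $\Tor_3$. Since Tor commutes with filtered colimits in the first variable, $\Tor_3^A(A/(b\m+N),M)=0$.
\item The sequence $0\to k\to A/(b\m+N)\to V/bV\to0$ together with $\Tor_2^A(k,M)=0$ then contradicts (E).
\end{enumerate}
The colimit is over cyclic modules in the first variable and proves vanishing. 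Writing $H$ as a colimit to detect a nonzero element at a finite stage, as you suggest, does not lead there.

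\textbf{Sharpness.} Your sketch is right in only one reading. With value group $\mathbb{Q}$ and idempotent maximal ideal $\p$, the ring $V/r\p=V/\{v>v(r)\}$ is the paper's example. But the module of flat dimension two is $A/\bar rA$, resolved by $0\to\m\to A\to A\to A/\bar rA\to0$ with $\m$ flat and $\bar rA\cong k$ not flat. It is not $V/J$, which is the residue field and has flat dimension one. Quotients $V/rV$ by principal ideals have principal annihilators and do not provide this mechanism.
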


The main issue is local. For a local Gaussian ring $A$, put $N=\Nil(A)$ and $D=\{x\in A:x^2=0\}$. Although $A/D$ is a chain ring, it need not be reduced. In contrast, $A/N$ is a valuation domain and has weak global dimension at most one. These two quotients have different roles in the proof. The chain ring $A/D$ propagates nonvanishing between cyclic modules; the valuation domain $A/N$ controls a change-of-rings spectral sequence and the annihilators of torsion elements.

The ideal used for the square-zero part of the argument is not the whole nilradical. It is
\[
 I=\ker(A\longrightarrow A_N).
\]
We show that $NI=0$, even if $N^2\ne0$. For a hypothetical module of flat dimension three, the obstruction module $H=\Tor_2^A(A/N,M)$ is identified with $\Tor_1^A(I,M)$. A first localization makes a nonzero element of $H$ power-torsion under every element of the new maximal ideal. A second localization does the same for a suitable element of $I$. Preservation of the required Tor nonvanishing at the second localization is proved explicitly.

Section~\ref{sec:prelim} records the homological facts and the precise external Gaussian-ring inputs. Section~\ref{sec:kernel} studies $I$ and $H$. Sections~\ref{sec:normal}--\ref{sec:limit} prove the local contradiction. Section~\ref{sec:global} gives the global conclusion, and Section~\ref{sec:example} supplies a sharp example. The final section records several points where a quotient or localization argument would otherwise be incomplete.

\section{Preliminaries}\label{sec:prelim}

All rings are commutative with identity, and all modules are unital. For a module $X$, write $\Ann_R(X)$ for its annihilator; for an element $x$, also write $(0:x)=\Ann_R(x)$. A chain ring is a ring whose ideals are linearly ordered by inclusion. A valuation domain is a chain ring that is an integral domain. We use standard valuation-domain module theory as in \cite{FS}.

The local notation used throughout the proof is collected below. The letter $k$ denotes the residue field, whereas $K$ denotes the fraction field of the reduced quotient; they should not be identified.
\begin{center}
\begin{tabular}{@{}ll@{}}
\toprule
Symbol & Meaning \\
\midrule
$(A,\m)$ & A local Gaussian ring and its maximal ideal \\
$N$ & The nilradical $\Nil(A)$ \\
$D$ & The ideal $\{x\in A:x^2=0\}$ \\
$V,\p,k$ & $A/N$, $\m/N$, and $A/\m$ \\
$S,Q,K$ & $A\setminus N$, $A_N$, and $\Frac(V)$ \\
$I$ & The kernel of $A\to A_N$ \\
$T_i(X),H$ & $\Tor_i^A(X,M)$ and $\Tor_2^A(V,M)$ \\
\bottomrule
\end{tabular}
\end{center}

For $f\in R[t]$, its content $c(f)$ is the ideal generated by its coefficients. A ring $R$ is Gaussian if $c(fg)=c(f)c(g)$ for every $f,g\in R[t]$. Quotients and localizations of Gaussian rings are Gaussian, and the Gaussian property can be tested at maximal ideals.

\subsection{Flat dimension, syzygies, and finitistic conventions}

\begin{definition}
An $R$-module $F$ is \emph{flat} if tensoring with $F$ preserves short exact sequences. The flat dimension $\fd_R M$ is the smallest integer $n\geq0$ for which there is an exact sequence
\[
 0\longrightarrow F_n\longrightarrow F_{n-1}\longrightarrow\cdots
 \longrightarrow F_0\longrightarrow M\longrightarrow0
\]
with every $F_i$ flat. It is infinity if no such integer exists. We assign flat dimension zero to the zero module. The weak global dimension is
\[
 \wgd(R)=\sup\{\fd_R M:M\text{ an }R\text{-module}\}.
\]
The invariant $\Fwd(R)$, which is called to be \emph{ finitistic weak dimension} of $R$, restricts this supremum to the modules of finite flat dimension. 
\end{definition}

We shall use the ideal criterion for flatness: an $R$-module $F$ is flat precisely when $J\ot_R F\to F$ is injective for every finitely generated ideal $J$. Equivalently, $\Tor_1^R(R/J,F)=0$ for all such $J$. This criterion and the standard construction of free resolutions are valid without coherence; see \cite{GlazBook,Weibel}. The following useful consequence explains both the reduction to dimension three and the cyclic tests used later.

\begin{proposition}\label{prop:syzygy-details}
For an $R$-module $M$ and $n\geq0$, the following are equivalent:
\begin{enumerate}
\item $\fd_R M\leq n$;
\item $\Tor_{n+1}^R(X,M)=0$ for every $R$-module $X$;
\item $\Tor_{n+1}^R(R/J,M)=0$ for every finitely generated ideal $J$;
\item an $n$th syzygy in a free resolution of $M$ is flat.
\end{enumerate}
If $\fd_R M=n>0$, then an $r$th syzygy, for $0\leq r\leq n$, has flat dimension $n-r$.
\end{proposition}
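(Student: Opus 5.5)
The plan is to fix a free resolution $\cdots\to P_1\to P_0\to M\to 0$ and, for $r\geq0$, write $K_r$ for the $r$th syzygy, so $K_0=M$ and there are short exact sequences $0\to K_{r+1}\to P_r\to K_r\to 0$. From the long exact Tor sequences and the flatness of the $P_r$, dimension shifting gives natural isomorphisms
\[
 \Tor_{i}^R(X,K_r)\cong\Tor_{i+r}^R(X,M)\qquad(i\geq1,\ r\geq0)
\]
for every module $X$. I would prove the cycle (1)$\Rightarrow$(2)$\Rightarrow$(3)$\Rightarrow$(4)$\Rightarrow$(1), and then the statement about syzygies.

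For (1)$\Rightarrow$(2), take a flat resolution $0\to F_n\to\cdots\to F_0\to M\to0$. Since flat modules are Tor-acyclic, this resolution computes Tor (the standard balancing argument, as in \cite{Weibel}), and it vanishes in degrees above $n$. If one prefers to avoid that citation, the same dimension shift along the flat resolution gives $\Tor_{n+1}^R(X,M)\cong\Tor_1^R(X,F_n)=0$. The step (2)$\Rightarrow$(3) is trivial. For (3)$\Rightarrow$(4), the isomorphism above with $r=n$ gives $\Tor_1^R(R/J,K_n)\cong\Tor_{n+1}^R(R/J,M)=0$ for every finitely generated ideal $J$, so $K_n$ is flat by the ideal criterion. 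When $n=0$ this reads directly as flatness of $M=K_0$. For (4)$\Rightarrow$(1), the truncated sequence $0\to K_n\to P_{n-1}\to\cdots\to P_0\to M\to0$ is a flat resolution of length $n$.

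The independence of the choice of resolution in (4) comes for free: conditions (1)--(3) do not refer to any resolution, so "an $n$th syzygy" may be read as "any". For the final claim, suppose $\fd_RM=n>0$ and $0\leq r\leq n$. The sequence $0\to K_n\to P_{n-1}\to\cdots\to P_r\to K_r\to0$ shows $\fd K_r\leq n-r$. For the reverse inequality, suppose $\fd K_r\leq n-r-1$ with $r<n$. By (2) and dimension shifting, $\Tor_{n}^R(X,M)\cong\Tor_{n-r}^R(X,K_r)=0$ for all $X$, since $n-r\geq1$. Then $\fd M\leq n-1$, which is a contradiction. The case $r=n$ is just flatness of $K_n$, and there the value $0$ is consistent with the convention for the zero module.

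I expect the only delicate point to be the justification in (1)$\Rightarrow$(2) that flat resolutions compute Tor, or equivalently that dimension shifting works with flat modules and not only projective ones. This reduces to the vanishing $\Tor_i(X,F)=0$ for $i\geq1$ and $F$ flat. That vanishing holds for $i=1$ by the definition of flatness, and for higher $i$ by shifting along $0\to L\to P\to F\to0$, where $L$ is again flat because $P$ and $F$ are. No coherence hypothesis enters anywhere, which matches the remark preceding the proposition.
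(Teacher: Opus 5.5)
Your proof is correct and follows the paper's route: dimension shifting along a free resolution, the ideal criterion for (iii)$\Rightarrow$(iv), truncation for (iv)$\Rightarrow$(i), and computing Tor from a flat resolution for (i)$\Rightarrow$(ii). The only variation is in the final claim. There you rule out $\fd_R K_r<n-r$ by Tor vanishing and dimension shifting, whereas the paper splices a shorter flat resolution of $K_r$ onto the free sequences; both arguments are valid.
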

\begin{proof}
Let $K_0=M$, and choose exact sequences
\[
 0\longrightarrow K_{i+1}\longrightarrow F_i\longrightarrow K_i
 \longrightarrow0
\]
with $F_i$ free. The Tor long exact sequence gives
\[
 \Tor_j^R(X,K_{i+1})\cong\Tor_{j+1}^R(X,K_i)
 \quad(j\geq1).
\]
Iteration identifies $\Tor_1^R(R/J,K_n)$ with $\Tor_{n+1}^R(R/J,M)$. Thus (iii) implies (iv) by the ideal criterion. Condition (iv) gives a flat resolution of length $n$, proving (i). Computing Tor from such a resolution proves (ii), which plainly implies (iii).

The same dimension-shifting identities show $\fd_R K_r\leq n-r$. If the inequality were strict for $r<n$, splicing a shorter flat resolution of $K_r$ to the displayed free sequences would give $\fd_R M<n$. For $r=n$, flatness gives the asserted value zero with our convention.
\end{proof}

\begin{remark}
The two uses of highest Tor in the proof are logically distinct. Flat dimension exactly three provides at least one nonzero third Tor. Flat dimension at most three makes fourth Tor vanish and therefore makes third Tor preserve monomorphisms. The first property detects an obstruction; the second transfers it along embeddings.
\end{remark}

\subsection{Related ring-theoretic concepts}

A ring is \emph{coherent} if every finitely generated ideal is finitely presented. For a coherent ring, annihilators of individual elements are finitely generated. A domain is \emph{Pr\"ufer} if each nonzero finitely generated ideal is invertible. A commutative ring is \emph{arithmetical} if its localizations at maximal ideals are chain rings. These definitions distinguish the hypotheses appearing in the results discussed in the introduction: the Gaussian hypothesis does not include coherence, and our sharp example is not coherent.

An element is \emph{regular} if multiplication by it on the ring is injective. A \emph{total ring of quotients} is a ring in which every regular element is a unit. This terminology does not assert that the ring is a field or that its finitistic weak dimension is zero. The example in Section~\ref{sec:example} makes both distinctions explicit.

An ideal $J$ is \emph{nil} if each of its elements is nilpotent. It is \emph{nilpotent} if $J^r=0$ for one integer $r$ independent of the element. A nil ideal need not be nilpotent. In particular, the nilradical $N$ of a local Gaussian ring need not have bounded nilpotency degree. We never impose such a bound on $N$.

The ideal $D$ is more restrictive: its elements satisfy the particular equation $x^2=0$. In a general commutative ring, the square-zero elements need not form an ideal with square zero. The assertions that they do so here, and that $A/D$ is a chain ring, are substantive parts of Gaussian structure theory. Similarly, $I=\ker(A\to A_N)$ is not introduced as an arbitrary square-zero ideal; its annihilator and divisibility properties will be derived from the Gaussian hypothesis.

For context, the fqf condition used in \cite{Couchot15} says that a finitely generated ideal $J$ is flat as a module over $R/\Ann_R(J)$. It supplies an earlier finitistic bound for a related class of rings. We do not assume this condition or use it to replace Gaussian structure in the argument below.

\subsection{Valuation-domain facts used in the localizations}

\begin{lemma}\label{lem:valuation-details}
Let $V$ be a valuation domain, with maximal ideal $\p$ and fraction field $K$.
\begin{enumerate}
\item Every finitely generated ideal of $V$ is principal. Every ideal is the directed union of its principal subideals.
\item Every torsion-free $V$-module is flat; consequently $\wgd(V)\leq1$.
\item The radical of a proper ideal of $V$ is prime.
\item If $\q$ is prime, $c\in\q$, and $s\notin\q$, then $c\in sV$.
\item If $\p$ is not principal and $0\ne b\in\p$, then the principal ideals $uV$ with $bV\subseteq uV\subsetneq\p$ have union $\p$, and none is a largest member.
\end{enumerate}
\end{lemma}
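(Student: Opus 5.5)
I will prove the five items in order, using only the defining property that the ideals of $V$ are totally ordered by inclusion and that $V$ is a domain.

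For (i), take a finitely generated ideal $J=(a_1,\dots,a_n)$. The principal ideals $a_iV$ form a chain, so one of them contains all the others and $J=a_jV$. Any ideal is the union of the principal ideals $aV$ with $a$ in it. This family is directed because of total ordering, or because it is closed under finite sums by the first sentence.

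For (ii), I will use the ideal criterion recalled above. Let $F$ be torsion-free. By (i), every finitely generated ideal is $aV$ for some $a$. If $a=0$ there is nothing to check. If $a\ne0$, then $aV\cong V$ as a module, and $aV\otimes_V F\to F$ becomes multiplication by $a$ on $F$, which is injective since $F$ is torsion-free. Hence $F$ is flat. Now let $X$ be any module and choose a free presentation $0\to X_1\to F_0\to X\to0$. The submodule $X_1$ of the torsion-free module $F_0$ is torsion-free, hence flat. Therefore $\fd X\le1$, and so $\wgd(V)\le1$.

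For (iii), let $J$ be a proper ideal and suppose $xy\in\sqrt J$ with $x,y\notin\sqrt J$. By total ordering we may assume $xV\subseteq yV$, say $x=yv$. Then $x^2=xyv\in\sqrt J$, so $x\in\sqrt J$, a contradiction. Also $\sqrt J\ne V$ because $J$ is proper. Hence $\sqrt J$ is prime.

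For (iv), compare $cV$ and $sV$. If $sV\subseteq cV$, then $s\in cV\subseteq\q$, which is false. So $cV\subseteq sV$, that is, $c\in sV$.

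For (v), take $u\in\p$. By comparability, either $b\in uV$ or $uV\subsetneq bV$; in the second case $u\in bV$, and $bV\subsetneq\p$ because $\p$ is not principal. So every element of $\p$ lies in some $uV$ with $bV\subseteq uV$. Each such $uV$ is a proper subideal of $\p$ because $\p$ is not principal. Hence these ideals have union $\p$.

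Suppose $uV$ were a largest member. Since $\p\ne uV$, pick $w\in\p\setminus uV$. Then $uV\subsetneq wV$, so $wV$ contains $bV$ and, being principal, is properly contained in $\p$. Thus $wV$ is a strictly larger member, a contradiction.

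The only mildly delicate points are keeping the properness in (v) straight and, in (ii), identifying $aV\otimes F\to F$ with multiplication by $a$ via $aV\cong V$. I do not expect a genuine obstacle.
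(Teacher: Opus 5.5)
Your proof is correct and follows essentially the same route as the paper: comparability of principal ideals for (i), (iv) and (v), and the ideal criterion with $aV\cong V$ plus a torsion-free first syzygy for (ii). The differences are cosmetic. In (iii) you use $x^2=(xy)v\in\sqrt J$ where the paper computes $a^{2n}=(ab)^nr^n\in J$. In (v) you show directly that no member is largest by exhibiting $w\in\mathfrak p\setminus uV$, whereas the paper deduces this from the union assertion and the nonprincipality of $\mathfrak p$.
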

\begin{proof}
For (i), a finite linearly ordered set of principal ideals has a largest member, which equals the ideal generated by all the elements. The union assertion follows by applying this observation to any finite subset of an ideal.

For (ii), let $L$ be torsion-free. Every nonzero finitely generated ideal is $aV\cong V$. Under this isomorphism the map $aV\ot_V L\to L$ is multiplication by $a$, which is injective. The ideal criterion gives flatness. For an arbitrary module, the kernel of a surjection from a free module is torsion-free, and hence flat. This gives a flat resolution of length one.

For (iii), suppose $ab\in\sqrt J$. Choose $n$ with $(ab)^n\in J$. By comparability, either $a=br$ or $b=ar$ for some $r\in V$. In the first case $a^{2n}=(ab)^nr^n\in J$, and in the second $b^{2n}\in J$. Thus one factor belongs to $\sqrt J$.

For (iv), comparability gives either $cV\subseteq sV$ or $sV\subseteq cV$. The latter would imply $s\in\q$. Finally, given $c\in\p$, the larger of $bV$ and $cV$ is a member of the family in (v). If any member contained all the others, their union would be that principal ideal, contradicting nonprincipality of $\p$.
\end{proof}

\begin{lemma}\label{lem:torsion-localization}
Let $V$ be a valuation domain, $H$ be a $V$-module, and let $0\ne h\in H$ have nonzero annihilator. Put $\q=\sqrt{\Ann_V(h)}$. Then $h/1\ne0$ in $H_{\q}$ and every $c\in\q V_{\q}$ has a power annihilating $h/1$. A module element $z$ need not survive at $\q$ merely because it is nonzero. It does survive if $cz\ne0$ for some $c\in\q$ and $V$ is a valuation domain.
\end{lemma}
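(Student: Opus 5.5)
The plan is to prove the two assertions of Lemma~\ref{lem:torsion-localization} separately, working throughout with the ideal $J=\Ann_V(h)$.

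\textbf{Properties of $\q$.} Since $h\ne0$, the ideal $J$ is proper. By Lemma~\ref{lem:valuation-details}(iii), $\q=\sqrt J$ is prime. Because $J\ne0$, we also have $\q\ne0$.

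\textbf{Survival of $h$.} Suppose $h/1=0$ in $H_{\q}$. Then $sh=0$ for some $s\notin\q$, so $s\in J\subseteq\q$, a contradiction. Hence $h/1\ne0$.

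\textbf{Power-torsion.} Take $c/t\in\q V_{\q}$ with $c\in\q$ and $t\notin\q$. Since $c\in\sqrt J$, there is $n$ with $c^n\in J$. Then $(c/t)^n h/1=c^nh/t^n=0$. This uses only that $\q$ is the radical of the annihilator.

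\textbf{The remarks about an arbitrary element $z$.} For the negative statement I would give an example. Take a valuation domain $V$ with a nonmaximal nonzero prime $\q$, for instance of rank two. Let $s\in\p\setminus\q$ and $z=1+sV\in V/sV$. Then $z\ne0$, but $sz=0$ with $s\notin\q$, so $z/1=0$ in $(V/sV)_\q$. The intended reading is that "surviving at $\q$" is not automatic for an element whose annihilator differs from $J$.

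For the positive statement, assume $cz\ne0$ for some $c\in\q$, and suppose $z/1=0$. Then $sz=0$ for some $s\notin\q$. By Lemma~\ref{lem:valuation-details}(iv), $c\in sV$, say $c=sr$. Then $cz=r(sz)=0$, a contradiction. So $z/1\ne0$.

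\textbf{Main obstacle.} The only delicate step is the one using comparability: $c$ must be divisible by every $s$ outside $\q$. This is exactly where the valuation hypothesis enters, and Lemma~\ref{lem:valuation-details}(iv) supplies it. The remaining steps are direct from the definitions of localization and radical.
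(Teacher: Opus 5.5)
Your proposal is correct and follows the paper's proof step for step. You use primality of $\sqrt{\Ann_V(h)}$ from Lemma~\ref{lem:valuation-details}(iii), the same survival and power-annihilation arguments, and divisibility $c\in sV$ from Lemma~\ref{lem:valuation-details}(iv) for the element $z$. Your only addition is the explicit example $z=1+sV\in V/sV$ with $s\in\p\setminus\q$ in a rank-two valuation domain, which correctly illustrates the negative remark that the paper leaves unproved.
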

\begin{proof}
The annihilator is proper, so its radical is a proper prime by Lemma~\ref{lem:valuation-details}. If $h/1=0$, some $s\notin\q$ would annihilate $h$, contradicting $\Ann_V(h)\subseteq\q$. For $c=a/s\in\q V_{\q}$, a power of $a\in\q$ belongs to $\Ann_V(h)$, which proves the power-annihilation assertion. Finally, if $s\notin\q$ annihilated $z$, then $c\in sV$ by the preceding lemma, and hence $cz=0$.
\end{proof}

\subsection{Elementary homological facts}

\begin{lemma}\label{lem:top}
Let $M$ be an $A$-module with $\fd_A M\leq n$. Then $\Tor_n^A(-,M)$ preserves monomorphisms. If $B$ embeds into a flat $A$-module and $n\geq1$, then $\Tor_n^A(B,M)=0$.
\end{lemma}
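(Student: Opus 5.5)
Both assertions follow from the long exact Tor sequence together with the vanishing $\Tor_{n+1}^A(-,M)=0$ supplied by Proposition~\ref{prop:syzygy-details}.

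\textbf{First assertion.} Let $0\to X'\to X\to X''\to 0$ be exact, where $X'\to X$ is an arbitrary monomorphism with cokernel $X''$. The long exact Tor sequence in the first variable contains the segment
\[
 \Tor_{n+1}^A(X'',M)\longrightarrow\Tor_n^A(X',M)\longrightarrow\Tor_n^A(X,M).
\]
Since $\fd_A M\le n$, condition (ii) of Proposition~\ref{prop:syzygy-details} gives $\Tor_{n+1}^A(X'',M)=0$. Hence $\Tor_n^A(X',M)\to\Tor_n^A(X,M)$ is injective. No finiteness condition on $X$ or $X''$ is needed, because condition (ii) applies to every module.

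\textbf{Second assertion.} Suppose $B\hookrightarrow F$ with $F$ flat and $n\ge1$. By the first assertion, $\Tor_n^A(B,M)$ embeds into $\Tor_n^A(F,M)$. Tor is balanced, so $\Tor_n^A(F,M)$ may be computed from a projective resolution of $M$ tensored with $F$. Because $F$ is flat, $F\otimes_A-$ is exact, so this complex has no homology in positive degrees. Thus $\Tor_n^A(F,M)=0$ for $n\ge1$, and therefore $\Tor_n^A(B,M)=0$.

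There is no real obstacle here. The only point worth stating explicitly is the balance of Tor, which lets flatness of the first argument kill higher Tor even though the definitions were phrased with $M$ in the second variable.
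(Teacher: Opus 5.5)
Your proof is correct and matches the paper's: you use the segment $\Tor_{n+1}^A(X'',M)\to\Tor_n^A(X',M)\to\Tor_n^A(X,M)$ with vanishing first term, then take the target to be flat. Your explicit check that $\Tor_n^A(F,M)=0$ for flat $F$ and $n\geq1$ simply spells out a step the paper leaves implicit.
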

\begin{proof}
For $0\to U\to W\to C\to0$, the relevant exact sequence begins with
\[
 \Tor_{n+1}^A(C,M)\longrightarrow\Tor_n^A(U,M)
 \longrightarrow\Tor_n^A(W,M).
\]
The first term is zero. The second assertion follows by taking $W$ flat.
\end{proof}

\begin{lemma}\label{lem:localtor}
Let $S$ be multiplicatively closed and $Q=S^{-1}A$. If $X$ is a $Q$-module, then
\[
 \Tor_i^A(X,M)\cong\Tor_i^Q(X,S^{-1}M)\quad(i\geq0).
\]
Moreover, localization commutes with Tor in both variables.
\end{lemma}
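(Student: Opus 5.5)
The plan is to reduce both assertions to flat base change along the flat ring map $A\to Q=S^{-1}A$. Throughout, $M$ is an arbitrary $A$-module.

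For the isomorphism, first choose a free (or flat) resolution $P_\bullet\to M$ over $A$. Localization is exact, so $S^{-1}P_\bullet\to S^{-1}M$ is a resolution of $S^{-1}M$ by flat $Q$-modules. Since $S^{-1}F\cong Q\ot_A F$ and $Q$ is $A$-flat, each $S^{-1}P_i$ is $Q$-flat. Next, for a $Q$-module $X$, use the canonical identification
\[
X\ot_A P_i\cong X\ot_Q(Q\ot_A P_i)=X\ot_Q S^{-1}P_i.
\]
It is natural in $P_i$, so it gives an isomorphism of complexes $X\ot_A P_\bullet\cong X\ot_Q S^{-1}P_\bullet$. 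Taking homology, the left side computes $\Tor_i^A(X,M)$. The right side computes $\Tor_i^Q(X,S^{-1}M)$, because Tor over $Q$ may be computed with a flat resolution in one variable (balancing of Tor).

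For the second assertion, fix $A$-modules $X$ and $M$ and again take the resolution $P_\bullet\to M$. The relevant chain of isomorphisms is
\[
S^{-1}\Tor_i^A(X,M)=S^{-1}H_i(X\ot_A P_\bullet)\cong H_i\bigl(S^{-1}(X\ot_A P_\bullet)\bigr)\cong H_i\bigl(S^{-1}X\ot_Q S^{-1}P_\bullet\bigr)=\Tor_i^Q(S^{-1}X,S^{-1}M).
\]
The first step holds because localization is exact and therefore commutes with homology. The second uses the natural isomorphism $S^{-1}(X\ot_A P)\cong S^{-1}X\ot_Q S^{-1}P$. By symmetry of Tor, the same argument works with the roles of the two variables exchanged, which gives the claim "in both variables". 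Combining this with the first part also yields $S^{-1}\Tor_i^A(X,M)\cong\Tor_i^A(S^{-1}X,M)$.

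The only point needing care is that Tor over $Q$ may be computed from a flat, rather than projective, resolution of $S^{-1}M$, or alternatively that $S^{-1}$ of a free resolution is a free resolution over $Q$, since $S^{-1}A^{(\Lambda)}=Q^{(\Lambda)}$. Using free resolutions avoids even this issue, so I expect no real obstacle. Everything rests on the flatness of $Q$ over $A$ and the naturality of the tensor identifications.
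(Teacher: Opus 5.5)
Your proof is correct and follows essentially the same route as the paper: you localize a free $A$-resolution of $M$ to obtain a free $Q$-resolution of $S^{-1}M$, identify $X\ot_A P_\bullet$ with $X\ot_Q S^{-1}P_\bullet$ for a $Q$-module $X$, and use exactness of localization for the second assertion. You spell out in detail the tensor identifications and the commutation of $S^{-1}$ with homology, which the paper's proof leaves implicit.
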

\begin{proof}
Localize a free resolution of $M$. Since localization is exact, it becomes a free $Q$-resolution of $S^{-1}M$. For a $Q$-module $X$, tensoring this resolution over $Q$ is the same as tensoring the original resolution with $X$ over $A$. The usual localization identity follows by the same argument.
\end{proof}

\begin{lemma}\label{lem:residue2}
Let $(A,\m)$ be local, let $\fd_A M\leq3$, and put $k=A/\m$. If $\m^2$ is flat and $\Tor_3^A(k,M)=0$, then $\Tor_2^A(k,M)=0$.
\end{lemma}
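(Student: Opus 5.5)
The plan is to compute $\Tor_2^A(k,M)$ by dimension shifting along the two short exact sequences
\[
 0\longrightarrow\m\longrightarrow A\longrightarrow k\longrightarrow0,
 \qquad
 0\longrightarrow\m^2\longrightarrow\m\longrightarrow\m/\m^2\longrightarrow0,
\]
and then split into cases according to whether $\m/\m^2$ vanishes. Throughout, we use that Tor commutes with direct sums in the first variable, and that $\m/\m^2$ is a $k$-vector space, hence a direct sum of copies of $k$ as an $A$-module. Apart from knowing that Tor is defined, the hypothesis $\fd_A M\leq3$ does not seem to be needed; the argument only uses the flatness of $\m^2$ and the vanishing of $\Tor_3^A(k,M)$.

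First, from the sequence $0\to\m\to A\to k\to0$ and the flatness of $A$, the long exact sequence gives isomorphisms
\[
 \Tor_3^A(k,M)\cong\Tor_2^A(\m,M)
 \quad\text{and}\quad
 \Tor_2^A(k,M)\cong\Tor_1^A(\m,M).
\]
So the hypothesis gives $\Tor_2^A(\m,M)=0$, and it suffices to prove $\Tor_1^A(\m,M)=0$ or to show $\Tor_2^A(k,M)=0$ directly.

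Next, apply Tor to the second sequence. Since $\m^2$ is flat, $\Tor_i^A(\m^2,M)=0$ for $i\geq1$. The relevant segment is
\[
 \Tor_2^A(\m,M)\longrightarrow\Tor_2^A(\m/\m^2,M)\longrightarrow\Tor_1^A(\m^2,M).
\]
Its outer terms vanish, so $\Tor_2^A(\m/\m^2,M)=0$. Now split into two cases.

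If $\m\neq\m^2$, then $\m/\m^2\cong k^{(\Lambda)}$ with $\Lambda\neq\emptyset$. Hence $\Tor_2^A(k,M)$ is a direct summand of $\Tor_2^A(\m/\m^2,M)=0$, and so it vanishes.

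If $\m=\m^2$, then $\m$ itself is flat. Therefore $\Tor_2^A(k,M)\cong\Tor_1^A(\m,M)=0$.

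The only step requiring care is the case distinction. When $\m$ is idempotent, which can happen for non-noetherian valuation-type rings, the passage through $\m/\m^2$ gives no information, and we must instead use the flatness of $\m=\m^2$ directly.
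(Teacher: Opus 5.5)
Your proof is correct and essentially the paper's argument: both show $\Tor_2^A(\m/\m^2,M)=0$ and then split on whether $\m=\m^2$. The paper gets that vanishing in one step from $0\to\m/\m^2\to A/\m^2\to k\to0$ and $\fd_A(A/\m^2)\leq1$, whereas you go through $0\to\m\to A\to k\to0$ and $0\to\m^2\to\m\to\m/\m^2\to0$. You are also right that the hypothesis $\fd_A M\leq3$ is never used, and the paper's proof does not use it either.
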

\begin{proof}
Flatness of $\m^2$ implies $\fd_A(A/\m^2)\leq1$. The sequence
\[
0\longrightarrow\m/\m^2\longrightarrow A/\m^2\longrightarrow k\longrightarrow0
\]
therefore gives $\Tor_2^A(\m/\m^2,M)=0$. If $\m/\m^2\ne0$, this module is a nonzero direct sum of copies of $k$, and Tor commutes with direct sums. If $\m/\m^2=0$, then $\m=\m^2$ is flat and $\fd_A k\leq1$.
\end{proof}

\subsection{Filtered colimits and cyclic detection}

\begin{lemma}\label{lem:colimit-details}
For a directed system $(X_\alpha)$ of $A$-modules, an arbitrary $A$-module $M$, and $i\geq0$, the natural map
\[
 \colim_\alpha\Tor_i^A(X_\alpha,M)
 \longrightarrow\Tor_i^A(\colim_\alpha X_\alpha,M)
\]
is an isomorphism. If every element of every module in a directed system $(Y_\alpha)$ maps to zero at some later stage, then $\colim Y_\alpha=0$. It is sufficient that for every stage there be a later stage whose transition map is zero.
\end{lemma}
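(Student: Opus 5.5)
The first assertion is a standard fact about Tor and filtered colimits, and I would derive it from exactness of filtered colimits together with their compatibility with tensor products. Fix a free resolution $P_\bullet\to M$. Then $\Tor_i^A(X,M)=H_i(X\ot_A P_\bullet)$, functorially in $X$. This gives a directed system of complexes $X_\alpha\ot_A P_\bullet$ whose colimit is $(\colim X_\alpha)\ot_A P_\bullet$, since tensor products commute with all colimits: $-\ot_A P_j$ is a left adjoint. Filtered colimits of $A$-modules are exact, so they commute with taking homology. Hence
\[
\colim_\alpha H_i(X_\alpha\ot_A P_\bullet)\cong H_i\bigl(\colim_\alpha X_\alpha\ot_A P_\bullet\bigr).
\]
It remains to check that this isomorphism is the natural map induced by the canonical maps $X_\alpha\to\colim X_\alpha$. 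This holds because both maps are induced by the same maps of complexes. I would cite \cite{Weibel} for exactness of filtered colimits rather than reprove it. If I wanted it self-contained, I would use the explicit description of $\colim Y_\alpha$ as $\bigsqcup Y_\alpha/\!\sim$, where $y_\alpha\sim y_\beta$ when they agree at some later stage.

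For the vanishing criterion, I would use that same explicit description. Every element of $\colim Y_\alpha$ is the image of some $y\in Y_\alpha$, and it is zero exactly when $y$ maps to $0$ in some $Y_\beta$ with $\beta\geq\alpha$. So if every element dies at a later stage, every element of the colimit is zero.

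For the sufficient condition, suppose that for each $\alpha$ there is some $\beta\geq\alpha$ whose transition map $Y_\alpha\to Y_\beta$ is zero. Then every $y\in Y_\alpha$ maps to zero in $Y_\beta$, so the previous criterion applies. The wording "a later stage whose transition map is zero" should be read as the transition map from the given stage $\alpha$ to that later stage, and I would state this interpretation explicitly.

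I do not expect a genuine obstacle. The only point needing care is naturality: the abstract isomorphism must coincide with the canonical comparison map. I would handle this by writing both as maps induced on homology by the canonical chain maps $X_\alpha\ot P_\bullet\to(\colim X)\ot P_\bullet$.
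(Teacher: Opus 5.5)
Your proposal is correct and follows essentially the same route as the paper: compute $\Tor$ via a fixed free resolution of $M$, commute the colimit past the tensor product, and use exactness of filtered colimits to commute with homology. The vanishing assertions likewise come from the same description of elements of a filtered colimit. Your reading of the last sentence, that the transition map is the one from the given stage to the later stage, is the one the paper needs in Proposition~\ref{prop:local}.
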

\begin{proof}
Choose a free resolution $F_\bullet$ of $M$. Tensor products commute with colimits, so
\[
 (\colim X_\alpha)\ot_A F_\bullet
 \cong\colim(X_\alpha\ot_A F_\bullet).
\]
Filtered colimits of modules are exact. They therefore commute with kernels, images, and the quotient defining homology. This gives the isomorphism. An element of a filtered colimit is represented by an element at a single stage; its class is zero exactly when it becomes zero at some later stage. This proves the last assertions.
\end{proof}

\begin{lemma}\label{lem:cyclic-torsion}
Let $S$ be multiplicatively closed, and suppose $\Tor_n^A(U,M)\ne0$ for an $S$-torsion module $U$. Then there exists an ideal $J$ meeting $S$ such that $\Tor_n^A(A/J,M)\ne0$.
\end{lemma}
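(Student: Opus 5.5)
Write $U$ as the directed union of its finitely generated submodules and use Lemma~\ref{lem:colimit-details} to reduce to the case where $U$ is finitely generated. Since the natural map $\colim_\alpha\Tor_n^A(U_\alpha,M)\to\Tor_n^A(U,M)$ is an isomorphism and the target is nonzero, some finitely generated submodule $U_\alpha\subseteq U$ has $\Tor_n^A(U_\alpha,M)\ne0$. Every submodule of an $S$-torsion module is $S$-torsion, so we may replace $U$ by $U_\alpha$.

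Next, induct on the number $m$ of generators of $U=Au_1+\dots+Au_m$. If $m=1$, then $U\cong A/\Ann_A(u_1)$. Put $J=\Ann_A(u_1)$. Since $u_1$ is $S$-torsion, some $s\in S$ kills it, so $J$ meets $S$, and the claim holds. For $m>1$, put $U'=Au_1+\dots+Au_{m-1}$. This gives an exact sequence $0\to U'\to U\to U/U'\to0$, where $U/U'$ is cyclic, generated by the image of $u_m$, and still $S$-torsion. The long exact sequence
\[
 \Tor_n^A(U',M)\longrightarrow\Tor_n^A(U,M)\longrightarrow\Tor_n^A(U/U',M)
\]
is exact in the middle, and the middle term is nonzero. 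Hence at least one of the outer terms is nonzero. If the first is nonzero, apply induction to $U'$, which has $m-1$ generators and is $S$-torsion. If the third is nonzero, the cyclic case applies to $U/U'\cong A/J$ with $J=\Ann_A(\bar u_m)\ni s$ for a suitable $s\in S$.

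I do not expect a genuine obstacle. The only points needing care are the following. The colimit step must use the isomorphism of Lemma~\ref{lem:colimit-details} to conclude that some stage has nonzero Tor. This works because every element of the colimit comes from some stage, so a nonzero colimit forces a nonzero stage. Also, $S$-torsionness must pass to submodules and quotients, which is immediate.
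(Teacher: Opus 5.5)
Your proposal is correct and follows the paper's proof essentially step for step. First use Lemma~\ref{lem:colimit-details} to pass to a finitely generated submodule with nonzero $\Tor_n$. Then add one generator at a time, so that some cyclic $S$-torsion quotient $A/J$ must have nonzero $\Tor_n$; your induction on the number of generators is the paper's filtration argument written inductively.
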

\begin{proof}
The module $U$ is the directed union of its finitely generated submodules. By Lemma~\ref{lem:colimit-details}, at least one such submodule $U_0$ has nonzero $n$th Tor. Choose generators $u_1,\ldots,u_r$ and set $U_j=Au_1+\cdots+Au_j$. Each quotient $U_j/U_{j-1}$ is cyclic and $S$-torsion. If every quotient had zero $n$th Tor, induction using
\[
 \Tor_n^A(U_{j-1},M)\longrightarrow\Tor_n^A(U_j,M)
 \longrightarrow\Tor_n^A(U_j/U_{j-1},M)
\]
would give zero $n$th Tor for $U_0$. Thus some cyclic quotient works. The annihilator of its generator meets $S$ because that generator is $S$-torsion.
\end{proof}

The use of a cyclic filtration here does not require a composition series: the cyclic quotients can have arbitrary annihilators and need not be simple. Likewise, the transition maps in the final direct system need not be injective. What matters there is their eventual action on Tor.

\subsection{A change-of-rings calculation}

We spell out the spectral sequence used in Proposition~\ref{prop:H}. This also records its convergence and the precise range in which a tensor product occurs as a subquotient of a Tor group.

\begin{proposition}\label{prop:spectral-details}
Let $A\to V$ be a ring homomorphism with $\wgd(V)\leq1$, let $M$ be an $A$-module, and let $U$ be a $V$-module. Put $H_q=\Tor_q^A(V,M)$. For every $n\geq1$ there is a natural short exact sequence
\[
 0\longrightarrow U\ot_V H_n
 \longrightarrow\Tor_n^A(U,M)
 \longrightarrow\Tor_1^V(U,H_{n-1})\longrightarrow0.
\]
In particular, $\Tor_n^A(U,M)=0$ implies $U\ot_V H_n=0$.
\end{proposition}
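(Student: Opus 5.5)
We use the standard change-of-rings (base-change) spectral sequence
\[
 E^2_{p,q}=\Tor_p^V(U,\Tor_q^A(V,M))\Longrightarrow \Tor_{p+q}^A(U,M).
\]
To construct it, choose a free (or flat) $A$-resolution $F_\bullet\to M$. Then $U\ot_A F_\bullet\cong U\ot_V(V\ot_A F_\bullet)$. The complex $C_\bullet=V\ot_A F_\bullet$ is a complex of flat $V$-modules, bounded below, with homology $H_q=\Tor_q^A(V,M)$. Take a Cartan--Eilenberg flat resolution of $C_\bullet$ over $V$, or equivalently a flat resolution $P_\bullet\to U$ over $V$, and form the double complex $P_\bullet\ot_V C_\bullet$. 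We then compare the two filtrations.

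The first filtration uses that each $C_q$ is $V$-flat. Taking homology in the $P$-direction first collapses to $U\ot_V C_\bullet=U\ot_A F_\bullet$, whose homology is $\Tor^A_\ast(U,M)$. The second filtration takes homology in the $C$-direction first. By the Künneth argument for flat $P_p$, this gives $P_p\ot_V H_q$, and then $E^2_{p,q}=\Tor_p^V(U,H_q)$. Since the double complex lives in the first quadrant, convergence is automatic. No finiteness hypotheses on $A$, $M$ or $U$ are needed, because all modules involved are flat and tensor products are exact on them.

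Now impose $\wgd(V)\leq1$. Then $E^2_{p,q}=0$ for $p\geq2$, so only the columns $p=0$ and $p=1$ are nonzero. Every differential $d^r$ with $r\geq2$ shifts $p$ by $r\geq 2$, so all of them vanish and $E^2=E^\infty$. The abutment $\Tor_n^A(U,M)$ therefore has a two-step filtration. The subobject is $E^\infty_{0,n}=U\ot_V H_n$, the bottom filtration piece in the $p$-direction. The quotient is $E^\infty_{1,n-1}=\Tor_1^V(U,H_{n-1})$. This yields the stated short exact sequence. Naturality in $U$ and $M$ follows from functoriality of the double complex up to homotopy. The final assertion is immediate, because $U\ot_V H_n$ embeds in $\Tor_n^A(U,M)$.

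The main obstacle is bookkeeping rather than substance. One must check that the edge map $U\ot_V H_n\to\Tor_n^A(U,M)$ is injective and not merely defined. This holds because $E^\infty_{0,n}$ is the lowest filtration piece and no differential leaves or enters it once columns $p\geq 2$ vanish. One must also check that the identification $U\ot_A F_\bullet\cong U\ot_V C_\bullet$ is valid for arbitrary, not necessarily finitely generated, modules. A spectral-sequence-free alternative is also available. Split $C_\bullet$ into its cycles $Z_q$, boundaries $B_q$ and homology $H_q$. Since $\wgd(V)\leq1$, the submodules $B_q$ and $Z_q$ of flat modules are flat. One can then compute $H_n(U\ot_V C_\bullet)$ directly, exactly as in the proof of the universal coefficient theorem. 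This computation produces the same short exact sequence, and I would present it this way if the spectral sequence route felt too heavy.
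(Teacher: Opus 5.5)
Your proposal is correct and follows essentially the same route as the paper: the double complex $P_\bullet\otimes_V(V\otimes_A F_\bullet)$ is computed in both directions, and the resulting spectral sequence degenerates because only the columns $p=0,1$ occur. The only cosmetic difference is that the paper takes a length-one flat resolution $0\to P_1\to P_0\to U\to 0$ from the start, so the double complex itself has two columns, whereas you kill the higher columns at the $E^2$ page; your universal-coefficient alternative, using flatness of the $B_q$ and $Z_q$, would also work.
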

\begin{proof}
Choose a free $A$-resolution $F_\bullet\to M$ and a flat $V$-resolution
\[
 0\longrightarrow P_1\longrightarrow P_0\longrightarrow U\longrightarrow0.
\]
Such a resolution exists by $\wgd(V)\leq1$. Form the first-quadrant double complex
\[
 C_{p,q}=P_p\ot_V(V\ot_A F_q),\qquad p=0,1,\quad q\geq0.
\]
For each $q$, the module $V\ot_A F_q$ is free over $V$. Taking homology first in the $p$ direction leaves only $U\ot_A F_q$ in degree zero. Thus the homology of the total complex is $\Tor_*^A(U,M)$.

Taking homology first in the $q$ direction is also straightforward: flatness of $P_p$ gives $P_p\ot_V H_q$. Its homology in the $p$ direction is
\[
 E^2_{p,q}=\Tor_p^V(U,H_q).
\]
Only two columns occur. For $r\geq2$, a differential $d_r$ changes the first index by $r$ and hence has zero source or target. The spectral sequence therefore degenerates at the second page. Each total degree has a finite filtration of length at most two, so convergence has no inverse-limit qualification. The filtration in total degree $n$ has successive pieces $E^2_{0,n}$ and $E^2_{1,n-1}$, giving the stated exact sequence.
\end{proof}

\begin{remark}
This proposition concerns the homology of a tensor product complex. It does not assert that $V\ot_A F_\bullet$ is a resolution of $V\ot_A M$. Its higher homology groups are precisely the $H_q$ retained in the formula. In our application the two-column bound comes from the valuation domain $V=A/N$, not from the possibly nonreduced chain ring $A/D$.
\end{remark}

\subsection{Gaussian rings}

We state the results from \cite{BG,DT} in the forms needed below. The numbering in the next paragraph refers to the accessible preprint version \cite{DTpre}; this avoids ambiguity between versions.

\begin{proposition}[Gaussian structure and modulewise Tor inputs]\label{prop:inputs}
Let $(A,\m)$ be a local Gaussian ring, $N=\Nil(A)$, and $D=\{x\in A:x^2=0\}$.
\begin{enumerate}
\item $N$ is the unique minimal prime, $A/N$ is a valuation domain, $D^2=0$, and $A/D$ is a chain ring. If $a\notin D$, then $(0:a)\subseteq D$.
\item For $a,b\in A$, $(a,b)^2$ equals either $a^2A$ or $b^2A$. If $(a,b)^2=a^2A$ and $ab=0$, then $b^2=0$.
\item If $\fd_A M=n\geq1$, then $\Tor_n^A(A/D,M)=0$. If $\Tor_n^A(A/J,M)\ne0$, there exists $J'\supseteq J+D$ with $\Tor_n^A(A/J',M)\ne0$.
\item If every element of $\m$ is a zero divisor and $\fd_A M=n\geq1$, then $\Tor_n^A(A/\m,M)=0$.
\item Suppose every element of $\m$ is a zero divisor, $\m\ne D$, and there are no $0\ne d\in D$ and $a\notin D$ with $(0:d)=aA+D$. Then $\m=\m^2+D$ and $\m^2$ is flat.
\end{enumerate}
\end{proposition}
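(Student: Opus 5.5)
The plan is to treat this proposition as a collection of imported facts. Each item is matched to a statement in \cite{BG} or \cite{DTpre} with its hypotheses checked, and the parts that follow quickly from earlier parts are derived by hand. The work therefore has two components. First, translate the cited statements into the present notation: $N$, $D$, and modulewise Tor rather than weak global dimension. Second, confirm that nothing in them silently assumes coherence or finiteness of $\wgd(A)$.

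For (i) and (ii) I will use the structure theory of \cite{BG}. In a local Gaussian ring the nilradical is the unique minimal prime, and $A/N$ is a valuation domain, since it is a reduced local Gaussian ring. For any $a,b$ one has $(a,b)^2=(a^2,b^2)$ with $a^2,b^2$ comparable, which gives the first half of (ii). The second half of (ii) is the Bazzoni--Glaz lemma: $ab=0$ and $b^2\in a^2A$ force $b^2=0$. I will cite it in exactly this form. From (ii) I will then derive the last clause of (i). Let $a\notin D$ and $ab=0$. If $(a,b)^2=b^2A$, then (ii) with the roles of $a$ and $b$ exchanged gives $a^2=0$, which is impossible. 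Hence $(a,b)^2=a^2A$, and (ii) gives $b^2=0$, that is, $b\in D$. The assertions that $D$ is an ideal with $D^2=0$ and that $A/D$ is a chain ring are also results of \cite{BG}, and I will quote them there. One could check comparability of ideals in $A/D$ using (ii), but I will not rely on that.

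For (iii)--(v) I will go to the lemmas of \cite{DTpre} underlying the trichotomy, and check that each is proved for a fixed module $M$ with $\fd_A M=n$, not under a global hypothesis. The vanishing $\Tor_n^A(A/D,M)=0$ comes from their argument that multiplication by an element outside $D$ controls the top Tor. The enlargement $J\subseteq J'\supseteq J+D$ should follow from the same argument combined with the sequence $0\to(J+D)/J\to A/J\to A/(J+D)\to0$ and Lemma~\ref{lem:top}. For (iv), the hypothesis that $\m$ consists of zero divisors is what allows their residue-field argument to run. Item (v) is their dichotomy lemma: failing the existence of $d\in D$ and $a\notin D$ with $(0:d)=aA+D$, one gets $\m=\m^2+D$ and $\m^2$ flat.

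The main obstacle is (iii)--(v). There the published statements are phrased for $\wgd$, and I must verify line by line that the proofs use only $\Tor_{n+1}^A(-,M)=0$ for the given $M$. Where a step invokes global finiteness, I would replace it by Proposition~\ref{prop:syzygy-details} and Lemma~\ref{lem:top} applied to $M$ itself.
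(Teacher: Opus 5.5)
Your overall route is the paper's. The paper gives no independent proof: it imports (i)--(ii) from the local structure theory (Bazzoni--Glaz, as recalled in Theorems 2.5--2.7 of the Donadze--Thomas preprint) and (iii)--(v) from Lemmas 3.1, 3.3, 3.6 and 3.9 there. It adds the same caveat you raise, namely that only statements about a single module $M$ are used and that (v) is purely structural. Your deduction of $(0:a)\subseteq D$ for $a\notin D$ from (ii) is correct. The same trick also yields $D^2=0$ and comparability modulo $D$, if you would rather not cite those.

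The one place where you offer an actual mechanism, the enlargement $J'\supseteq J+D$ in (iii), does not work as described. Applied to $0\to(J+D)/J\to A/J\to A/(J+D)\to0$, Lemma~\ref{lem:top} only makes $\Tor_n^A((J+D)/J,M)\to\Tor_n^A(A/J,M)$ injective. Nothing kills $\Tor_n^A((J+D)/J,M)$, and in general $J'=J+D$ fails.

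Here is a concrete instance in Example~\ref{ex:sharp}. Take $M=A/xA$, so $n=2$ and $\Tor_2^A(-,M)\cong\Tor_1^A(-,\kk)$; hence $\Tor_2^A(A/L,M)\cong L/\m L$ for every ideal $L\subseteq\m$. There $D=\{0\}\cup\{v>1/2\}$ satisfies $D=\m D$ (divide by $t^\varepsilon$ for small $\varepsilon$). Now take the ideal $aA\subseteq D$ with $v(a)=3/4$ in the role of $J$. It has $aA/\m aA\ne0$. Thus $\Tor_2^A(A/aA,M)\ne0$, but $\Tor_2^A(A/(aA+D),M)=\Tor_2^A(A/D,M)=0$. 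The obstruction sits entirely in $\Tor_2^A((aA+D)/aA,M)$.

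The missing step is a cyclic filtration. Suppose $\Tor_n^A(A/(J+D),M)=0$. Then $\Tor_n^A((J+D)/J,M)\ne0$. By Lemma~\ref{lem:colimit-details}, some submodule generated by the images of finitely many $d_1,\dots,d_t\in D$ has nonzero $n$th Tor. As in Lemma~\ref{lem:cyclic-torsion}, some factor $A/J'$ then has $\Tor_n^A(A/J',M)\ne0$, where $J'=\{y: yd_j\in J+Ad_1+\cdots+Ad_{j-1}\}$. Here $J'\supseteq J$ trivially, and $J'\supseteq D$ because $D^2=0$. This is exactly why the statement asserts only the existence of some $J'\supseteq J+D$, and that weaker form is all Lemma~\ref{lem:detection} needs.
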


Parts (i)--(ii) are the standard local structure results recalled in \cite[Theorems 2.5--2.7]{DTpre}. Parts (iii)--(v) are \cite[Lemmas 3.1, 3.3, 3.6, and 3.9]{DTpre}. In particular, part (v) is a structural statement: it has no finite-weak-global-dimension hypothesis. We do not use Lemma 3.10 of that paper to infer flatness of $\m$ for a single finite-dimensional module.

\begin{lemma}[Zero-dimensional case]\label{lem:zero}
If $(B,\mathfrak n)$ is a zero-dimensional local Gaussian ring, then $\Fwd(B)\leq2$.
\end{lemma}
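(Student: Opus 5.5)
The plan is to argue by contradiction inside the local ring, using only the structural inputs of Proposition~\ref{prop:inputs}. First I dispose of the trivial case: if $\mathfrak n=0$ then $B$ is a field and $\Fwd(B)=0$. Otherwise $\mathfrak n=\Nil(B)$, because $B$ is zero-dimensional and local. So every element of $\mathfrak n$ is nilpotent and hence a zero divisor, which is exactly the hypothesis of Proposition~\ref{prop:inputs}(iv)--(v). Suppose some $M$ has $3\le\fd_B M<\infty$. Replacing $M$ by a suitable syzygy (Proposition~\ref{prop:syzygy-details}), I may assume $\fd_B M=3$. Then $\Tor_3^B(B/J,M)\ne0$ for some finitely generated ideal $J$, and $\Tor_4^B(-,M)=0$. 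Proposition~\ref{prop:inputs}(iii) lets me enlarge $J$ to an ideal $J'\supseteq J+D$ with $\Tor_3^B(B/J',M)\ne0$. Proposition~\ref{prop:inputs}(iv), with $k=B/\mathfrak n$, gives $\Tor_3^B(k,M)=0$, so $J'\ne\mathfrak n$; also $J'\ne B$. The goal is to show that every proper ideal $J'\supseteq D$ other than $\mathfrak n$ also has $\Tor_3^B(B/J',M)=0$.

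I split according to Proposition~\ref{prop:inputs}(v).

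\textbf{Case $\mathfrak n=D$.} Here $J'\supseteq\mathfrak n$ forces $J'=\mathfrak n$, which contradicts $\Tor_3^B(k,M)=0$.

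\textbf{Case: $\mathfrak n\ne D$ and there is no pair $d\in D\setminus0$, $a\notin D$ with $(0:d)=aB+D$.} Proposition~\ref{prop:inputs}(v) gives $\mathfrak n=\mathfrak n^2+D$ with $\mathfrak n^2$ flat. Then $\fd_B(B/\mathfrak n^2)\le1$, and Lemma~\ref{lem:residue2} gives $\Tor_2^B(k,M)=0$ as well. I would use that $B/D$ is a chain ring to compare $J'/D$ with the principal ideals $(x)+D$ for $x\in\mathfrak n^2$. The aim is to write $B/J'$ as a directed colimit (Lemma~\ref{lem:colimit-details}) of cyclic modules $B/((x)+D)$ with $x\in\mathfrak n^2$. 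For each such module I would kill $\Tor_3$ using the exact sequence $0\to (x)+D/\ldots\to B/D\to B/((x)+D)\to0$, together with $\Tor_3^B(B/D,M)=0$ and Lemma~\ref{lem:top} (which applies because $\Tor_3^B(-,M)$ preserves monomorphisms). The submodule $xB/(xB\cap D)$ should be controlled through $\mathfrak n^2$ being flat. Nilpotence of $x$ makes the filtration $x^iB$ finite, so a finite induction, as in Lemma~\ref{lem:cyclic-torsion}, reduces everything to subquotients killed by $\mathfrak n$, that is, to $k$-vector spaces. These have $\Tor_3$ and $\Tor_2$ equal to zero by the two vanishing statements above.

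\textbf{Case: such a pair $d,a$ exists.} Then $B d\cong B/(aB+D)$. Since $Bd\subseteq D$ and $D^2=0$, this is a module over $B/D$, and I would relate $\Tor_3^B(B/(aB+D),M)$ to $\Tor_3^B(Bd,M)$. Here I expect to exploit nilpotence of $a$: $a^m=0$ with $a\notin D$ contradicts $(0:a)\subseteq D$ from Proposition~\ref{prop:inputs}(i), by a descent on the least $m$ (if $a^{m}=0$, then $a^{m-1}\in(0:a)\subseteq D$, so $a^{2m-2}=0$; iterating and using $a\notin D$ should force a contradiction or pin $a$ down). Thus in a zero-dimensional ring this configuration should degenerate, and the argument is pushed back into the previous case.

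I expect the main obstacle to be the second case: passing from vanishing at the residue field (and flatness of $\mathfrak n^2$) to vanishing for arbitrary cyclic $B/J'$ with $J'\supseteq D$. The nilradical need not be nilpotent, so the finite filtration must be taken elementwise. I would first try the directed-colimit reduction to principal $J'$ modulo $D$, and then the $x$-adic filtration argument.
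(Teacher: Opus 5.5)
Your case $\mathfrak n=D$ is correct, and shorter than the paper's perfect-ring argument for that subcase. The other two cases have genuine gaps.

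In the no-pair case, the finite $x$-adic filtration cannot reduce $B/(xB+D)$ to $k$-vector spaces, because its subquotients are killed by $x$, not by $\mathfrak n$. In fact no finite filtration of $B'/xB'$ with subquotients killed by $\mathfrak n$ exists. Here $\mathfrak n'=\mathfrak n/D$ satisfies $\mathfrak n'=\mathfrak n'^2\neq0$, so $\mathfrak n'$ is not principal, and $\mathfrak n'^r=\mathfrak n'\not\subseteq xB'$ for every $r$. The sequence $0\to xB'\to B'\to B'/xB'\to0$ with $\Tor_3^B(B',M)=0$ only embeds $\Tor_3^B(B'/xB',M)$ into $\Tor_2^B(xB',M)$. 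Since $xB'$ is just another cyclic $B'$-module, nothing is gained. The facts you have (namely $\Tor_3^B(B',M)=0$, flatness of $\mathfrak n^2$, and $\Tor_2^B(k,M)=\Tor_3^B(k,M)=0$) do not kill highest Tor of cyclic $B'$-modules.

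The paper closes this case with two modulewise statements imported from Donadze--Thomas, valid for nonzero $a\in\mathfrak n'$: $\Tor_3^B(B'/aB',M)\neq0$ and $\Tor_3^B(B'/a\mathfrak n',M)=0$. You can get the first for one $a$ from your $J'$, by writing $B/J'$ as a colimit of the modules $B'/aB'$ with $a\in J'\setminus D$. The second is the missing ingredient, and no finite filtration can supply it. The comparable mechanism elsewhere in the paper (Lemma~\ref{lem:transition}) is a filtered colimit whose transition maps factor through cyclic modules $\lambda B\hookrightarrow B$. With both statements, the extension $0\to k\to B'/a\mathfrak n'\to B'/aB'\to0$, where $k\cong aB'/a\mathfrak n'$, together with $\Tor_2^B(k,M)=0$ gives the contradiction.

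In the pair case, the hoped-for degeneration does not occur. Pairs exist in zero-dimensional local Gaussian rings: for $B=F[t]/(t^3)$ over a field $F$, one has $D=(t^2)$ and $(0:t^2)=(t)=tB+D$. More generally, whenever $\mathfrak n$ is nilpotent and $\mathfrak n\neq D$, Proposition~\ref{prop:inputs}(v) forces a pair. Otherwise $\mathfrak n'=\mathfrak n'^2$ would be nilpotent and hence zero. Your descent also gives nothing. If $m$ is minimal with $a^m=0$, then $a^{m-1}\in(0:a)\subseteq D$ only yields $a^{2m-2}=0$, which already follows from $a^m=0$ because $m\geq3$.

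The pair should be used directly. Since $B/(aB+D)\cong dB\hookrightarrow B$, Lemma~\ref{lem:top} gives $\Tor_3^B(B/(aB+D),M)=0$. This is a contradiction only if you know $\Tor_3^B(B'/aB',M)\neq0$ at this particular $a$. That is again the imported nonvanishing for every nonzero $a\in\mathfrak n'$. Chain-ring propagation (Lemma~\ref{lem:chain}) from one $a_0$ reaches only nonzero elements of $a_0B'$.

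Your case split sends every nilpotent $\mathfrak n\neq D$ into this broken case, whereas the paper avoids pairs there altogether. In that situation $B$ is perfect, so minimal projective resolutions exist. Nakayama then makes the top $\Tor$ against $B/\mathfrak n$ nonzero, contradicting Proposition~\ref{prop:inputs}(iv).
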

\begin{proof}
Here $\mathfrak n=\Nil(B)$. If $\mathfrak n$ is nilpotent, $B$ is a perfect ring by the standard perfect-ring criterion \cite{Bass}. Thus flat modules are projective, and projective covers give minimal projective resolutions. If a nonzero module had finite positive projective dimension $n$, the last nonzero term in its minimal resolution, tensored with $B/\mathfrak n$, would be nonzero by nilpotent Nakayama. Consequently $\Tor_n^B(B/\mathfrak n,M)\ne0$, contrary to Proposition~\ref{prop:inputs}(iv). Thus in this case the finitistic weak dimension is zero.

Suppose $\mathfrak n$ is not nilpotent. We give the modulewise deduction underlying the remark after \cite[Theorem 4.5]{DTpre}. Assume $\fd_B M=3$, and write $D_B=\{x:x^2=0\}$, $B'=B/D_B$, and $\mathfrak n'=\mathfrak n/D_B$. The modulewise statements \cite[Lemmas 4.2(iii) and 4.4]{DTpre} give, for every nonzero $a\in\mathfrak n'$,
\begin{equation}\label{eq:zero-input}
 \Tor_3^B(B'/aB',M)\ne0,
 \qquad \Tor_3^B(B'/a\mathfrak n',M)=0.
\end{equation}
If $(0:d)=aB+D_B$ for some $0\ne d\in D_B$ and $a\notin D_B$, then $B/(aB+D_B)\cong dB\hookrightarrow B$, contradicting the first assertion of \eqref{eq:zero-input} and Lemma~\ref{lem:top}. Otherwise Proposition~\ref{prop:inputs}(v) implies that $\mathfrak n^2$ is flat. Part (iv) and Lemma~\ref{lem:residue2} give $\Tor_2^B(B/\mathfrak n,M)=0$.

For nonzero $a\in\mathfrak n'$, the cyclic module $aB'/a\mathfrak n'$ is the residue field: its generator cannot belong to $a\mathfrak n'$, since $a=ar$ with $r\in\mathfrak n'$ would imply $a=0$. The exact sequence
\[
0\to aB'/a\mathfrak n'\to B'/a\mathfrak n'\to B'/aB'\to0
\]
now contradicts \eqref{eq:zero-input}. This excludes flat dimension three. Dimension shifting excludes every larger finite value.
\end{proof}

\section{The localization kernel and the obstruction module}\label{sec:kernel}

Until Section~\ref{sec:global}, $A$ is local Gaussian. We use the notation
\[
 S=A\setminus N,\quad Q=A_N,\quad V=A/N,\quad K=\Frac(V),
 \quad I=\ker(A\to Q).
\]
The ring $Q$ is zero-dimensional and local, and $Q/N_Q=K$. In general $A\to Q$ need not be injective.

\begin{lemma}\label{lem:kernel}
The following assertions hold.
\begin{enumerate}
\item For $s\in S$, $N=sN+(0:s)$ and $(0:s)N=0$.
\item $I\subseteq D$, $NI=I^2=0$, and $I$ is a torsion $V$-module.
\item There is an exact sequence $0\to I\to N\to N_Q\to0$.
\item For $s\in S$, $A/(sA+D)\cong V/\bar sV$ and $sI=s^2I$.
\end{enumerate}
Here $\bar s$ denotes the image in $V$.
\end{lemma}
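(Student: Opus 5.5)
The plan is to derive everything from part (i). Part (i) in turn follows from Proposition~\ref{prop:inputs}(i)--(ii) together with the facts that $N$ is prime and $D^2=0$.

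\emph{Step (i).} Let $s\in S$ and $n\in N$. By Proposition~\ref{prop:inputs}(ii), $(s,n)^2$ equals $s^2A$ or $n^2A$. The second option would give $s^2\in n^2A\subseteq N$, which contradicts $s\notin N$ since $N$ is prime. Hence $(s,n)^2=s^2A$, and we can write $sn=s^2r$ for some $r\in A$.

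Since $s^2r=sn\in N$ and $s^2\notin N$, primality of $N$ gives $r\in N$. The element $y=n-sr$ satisfies $sy=0$, so
\[
n=sr+y\in sN+(0:s).
\]
The reverse inclusion $sN+(0:s)\subseteq N$ holds because $(0:s)\subseteq D\subseteq N$. Here $(0:s)\subseteq D$ is Proposition~\ref{prop:inputs}(i), which applies because $s\notin N\supseteq D$.

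For $(0:s)N=0$, take $x\in(0:s)$ and write $n=sn'+y$ as above. Then
\[
xn=x s n'+xy=xy\in D^2=0 .
\]

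\emph{Step (ii).} By definition $I=\bigcup_{s\in S}(0:s)$, so (i) gives $I\subseteq D$ and $NI=0$. Then $I^2\subseteq NI=0$ because $I\subseteq D\subseteq N$. Since $NI=0$, $I$ is a module over $V=A/N$. Each $x\in I$ is killed by some $s\in S$, whose image in $V$ is nonzero, so $I$ is $V$-torsion.

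\emph{Step (iii).} Localization is exact, so the kernel of $N\to N_Q$ is $I\cap N=I$. For surjectivity, take $n/s\in N_Q$. By (i) write $n=sn'+y$ with $n'\in N$ and $y\in(0:s)\subseteq I$. Then $n'/1=n/s-y/s=n/s$ in $Q$, because $y/s=0$ there. So $n/s$ is the image of $n'$.

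\emph{Step (iv).} By (i), $N=sN+(0:s)\subseteq sA+D$. Combined with $D\subseteq N$ this gives $sA+D=sA+N$, and therefore
\[
A/(sA+D)=A/(sA+N)\cong V/\bar sV .
\]
For the second assertion, let $x\in I$, say $tx=0$ with $t\in S$. Writing $x=sn+y$ as in (i) gives $sx=s^2n$ with $n\in N$. Then $ts^2n=stx=0$, so $n\in I$. Hence $sI\subseteq s^2I$, and the reverse inclusion is trivial.

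The only nonformal point is Step (i): the dichotomy for $(s,n)^2$ from Proposition~\ref{prop:inputs}(ii) must be used to obtain the quotient $r$ and to land it in $N$. Everything afterwards is bookkeeping with $D^2=0$.
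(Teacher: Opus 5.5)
Your proposal is correct and follows essentially the same route as the paper. Both use the dichotomy for $(s,x)^2$ to write $x=sr+d$ with $r\in N$ and $d\in(0:s)\subseteq D$. Both then read off (ii) from $I=\bigcup_{s\in S}(0:s)$, get surjectivity in (iii) from $x/s=r/1$, and obtain $sI=s^2I$ by showing the quotient lands back in $I$; you only make explicit a few inclusions (such as $(0:s)\subseteq D\subseteq N$) that the paper leaves implicit.
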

\begin{proof}
Take $x\in N$ and $s\in S$. The equality $(s,x)^2=x^2A$ is impossible because it would make $s^2$ nilpotent. Thus $sx=s^2r$ for some $r\in A$. Reduction modulo $N$ shows $r\in N$. Therefore
\begin{equation}\label{eq:division}
 x=sr+d,\qquad r\in N,\quad d\in(0:s)\subseteq D.
\end{equation}
For $b\in(0:s)$, one has $bx=bsr+bd=0$, since $D^2=0$. This proves (i). Now
\[
 I=\bigcup_{s\in S}(0:s),
\]
so (ii) follows. Each $x/s\in N_Q$ equals $r/1$ by \eqref{eq:division}; this proves surjectivity in (iii), and its kernel is $I$.

Equation \eqref{eq:division} gives $N\subseteq sA+D$, proving the quotient identity. For $i\in I$, the same Gaussian square calculation gives $si=s^2e$ for some $e\in A$. Choose $t\in S$ with $ti=0$. Then $ts^2e=0$, so $e\in I$. This proves $sI\subseteq s^2I$; the reverse inclusion is immediate.
\end{proof}

\subsection{The reduced quotient and a pullback description}

The following consequences of Lemma~\ref{lem:kernel} explain the structure behind the obstruction module. They are not additional hypotheses in the proof.

\begin{proposition}\label{prop:pullback}
Put $B=A/I$. Then $B$ is Gaussian, its nilradical is $N/I$, and every element outside $N/I$ is regular. The inclusion $B\hookrightarrow Q$ identifies $B$ with the inverse image of $V$ under $Q\to K$. Equivalently,
\[
 B\cong Q\times_K V
   =\{(q,v)\in Q\oplus V:q+N_Q=v\text{ in }K\}.
\]
There is an exact sequence of $A$-modules
\[
 0\longrightarrow B\longrightarrow Q\oplus V
 \longrightarrow K\longrightarrow0,
\]
where the last map is $(q,v)\mapsto(q+N_Q)-v$.
\end{proposition}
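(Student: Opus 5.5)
Since $B=A/I$ is a quotient of a Gaussian ring, it is Gaussian. Because $I=\ker(A\to Q)$, the induced map $B\to Q$ is injective, so we regard $B$ as a subring of $Q$. Every statement about $B$ can then be checked inside $Q$, using the decomposition $N=sN+(0:s)$ from Lemma~\ref{lem:kernel}(i) and the surjection $N\to N_Q$ from Lemma~\ref{lem:kernel}(iii).

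\emph{Nilradical and regularity.} By Lemma~\ref{lem:kernel}(ii), $I\subseteq D\subseteq N$. An element of $B$ is nilpotent exactly when some lift is nilpotent modulo $I$. Since $I\subseteq N$, this happens exactly when the lift lies in $N$, so $\Nil(B)=N/I$. Now let $s\in S$ and suppose $sx\in I$. Then $s x/1=0$ in $Q$. Since $s$ is a unit of $Q$, this gives $x/1=0$, that is, $x\in I$. Hence $s$ acts regularly on $B$.

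\emph{Pullback description.} The composite $B\to Q\to K=Q/N_Q$ factors through $A/N=V$, because $N$ maps into $N_Q$. This gives a map $B\to Q\times_K V$, $a\mapsto(a/1,\bar a)$. It is injective, since its first coordinate is already injective.

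For surjectivity, take $(q,v)$ with $q\equiv v$ modulo $N_Q$. Choose $a\in A$ lifting $v$. Then $q-a/1\in N_Q$. By Lemma~\ref{lem:kernel}(iii), $N\to N_Q$ is surjective, so $q-a/1=x/1$ for some $x\in N$. The element $a+x$ then maps to $(q,v)$, since $x\in N$ means $\overline{a+x}=v$.

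This same argument shows that $B$ is the inverse image of $V\subseteq K$ under $Q\to K$. The main point is the surjectivity of $N\to N_Q$. This is exactly where the Gaussian division identity \eqref{eq:division} enters, since it allows every fraction $x/s$ with $x\in N$ to be rewritten as $r/1$.

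\emph{Exact sequence.} Define $Q\oplus V\to K$ by $(q,v)\mapsto (q+N_Q)-v$. This map is surjective, since already $Q\to K$ is surjective. Its kernel is, by definition, the fibre product $Q\times_K V$, which we have identified with $B$ via $b\mapsto(b/1,\bar b)$. All maps here are $A$-linear, so this gives the stated exact sequence of $A$-modules.

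Each step is formal once Lemma~\ref{lem:kernel} is available. The only real care needed is to check that the map $V\to K$ used in the pullback is compatible with the identification $Q/N_Q=K$. This holds because $S$ maps onto the nonzero elements of $V$, which become invertible in $K$.
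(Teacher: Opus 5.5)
Your proof is correct and follows essentially the same route as the paper. You use $I\subseteq N$ for the nilradical, invertibility of $S$ in $Q$ for regularity (the paper argues directly with $sax=0$), and a lift through $A\to V$ combined with the surjectivity of $N\to N_Q$ from Lemma~\ref{lem:kernel}(iii) for the pullback description and the exact sequence.
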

\begin{proof}
A quotient of a Gaussian ring is Gaussian. Since $I\subseteq N$ and $I$ is nil, the nilradical of $B$ is $N/I$. If $a\notin N$ and $ax\in I$, there exists $s\notin N$ with $sax=0$. Since $sa\notin N$, this means $x\in I$. Thus $a+I$ is regular in $B$.

The quotient map $Q\to Q/N_Q=K$ sends the image of $A$ onto $V\subseteq K$. Conversely, if $q\in Q$ has image $v\in V$, choose $a\in A$ lifting $v$. Then $q-a\in N_Q$. Surjectivity of $N\to N_Q$ gives $n\in N$ with $q-a=n/1$, so $q$ is the image of $a+n$. Hence the inverse image of $V$ is exactly $B$. The fiber-product description follows. Finally $Q\to K$ is surjective; the difference map on $Q\oplus V$ is therefore surjective, with kernel the indicated fiber product.
\end{proof}

This description does not imply that $A$ is a split extension of $B$ by $I$, nor that $A\to B$ preserves flat resolutions. The proof uses only the actual embedding $B\hookrightarrow Q$ to annihilate highest Tor, followed by the short exact sequence $0\to I\to A\to B\to0$. Thus no choice of a coefficient subring or splitting is needed.

\begin{corollary}\label{cor:torsion-square}
For every $s\notin N$, one has $N^2=s^2N^2=sN^2$. Every $S$-torsion $A$-module $U$ is annihilated by $N^2$.
\end{corollary}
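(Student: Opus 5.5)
We need to prove the two assertions of the corollary: $N^2=s^2N^2=sN^2$ for every $s\notin N$, and $N^2U=0$ for every $S$-torsion $A$-module $U$.

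\textbf{First assertion.} Start from Lemma~\ref{lem:kernel}(i). For $s\in S$ it gives
\[
N=sN+(0:s)\qquad\text{and}\qquad (0:s)N=0.
\]
Squaring the decomposition, we get
\[
N^2=(sN+(0:s))^2\subseteq s^2N^2+s(0:s)N+(0:s)^2 .
\]
The middle term vanishes because $s(0:s)=0$. The last term vanishes because $(0:s)\subseteq D$ and $D^2=0$; alternatively, $(0:s)^2\subseteq (0:s)N=0$. Hence $N^2\subseteq s^2N^2$. The chain $s^2N^2\subseteq sN^2\subseteq N^2$ is automatic, so all three ideals coincide.

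\textbf{Second assertion.} Let $u\in U$, and choose $t\in S$ with $tu=0$. By the first part, $N^2=tN^2$. Therefore
\[
N^2u=tN^2u=N^2(tu)=0 .
\]
Since $u$ was arbitrary, $N^2U=0$.

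A slightly more careful check confirms the same conclusion elementwise. Any $x\in N$ can be written as $x=tr+d$ with $r\in N$ and $d\in(0:t)$. Then for $y\in N$,
\[
xy=try+dy=try,
\]
since $dy\in(0:t)N=0$. So $xyu=ry(tu)=0$.

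No step here looks like a real obstacle. The only point needing care is that the cross term $s(0:s)N$ and the term $(0:s)^2$ both vanish, and both follow directly from Lemma~\ref{lem:kernel}(i) together with $D^2=0$ from Proposition~\ref{prop:inputs}(i).
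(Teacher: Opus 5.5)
Your proposal is correct and follows essentially the same route as the paper: square the decomposition $N=sN+(0:s)$ from Lemma~\ref{lem:kernel}(i) to get $N^2=s^2N^2$, then use $N^2=tN^2$ for a $t\in S$ killing $u$. Your explicit check that the cross term $s(0:s)N$ and the term $(0:s)^2$ vanish fills in a step the paper leaves implicit.
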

\begin{proof}
By Lemma~\ref{lem:kernel}, $N=sN+(0:s)$ and $(0:s)N=0$. Squaring the first equality gives $N^2=s^2N^2$. Since $s^2N^2\subseteq sN^2\subseteq N^2$, all three ideals are equal. If $u\in U$ is killed by $s\in S$ and $n\in N^2$, write $n=sn'$ with $n'\in N^2$. Then $nu=n'su=0$.
\end{proof}

Although the torsion modules in Lemma~\ref{lem:detection} are consequently modules over $A/N^2$, their Tor groups in the argument are computed over $A$. Merely knowing the acting quotient ring does not justify changing the base ring in those Tor groups.

Assume now that $\fd_A M=3$, and write $T_j(X)=\Tor_j^A(X,M)$ and
\[
 H=T_2(V).
\]

\begin{proposition}\label{prop:H}
One has
\begin{equation}\label{eq:H}
 T_3(V)=0,\qquad H\cong T_1(I),\qquad S^{-1}H=0,
 \qquad I\ot_V H=0.
\end{equation}
\end{proposition}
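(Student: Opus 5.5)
The plan is to work entirely from two short exact sequences of $A$-modules: $0\to I\to A\to B\to0$, where $B=A/I$, and the pullback sequence $0\to B\to Q\oplus V\to K\to0$ of Proposition~\ref{prop:pullback}. The extra input is control of Tor against $Q$-modules through the zero-dimensional ring $Q$.

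\emph{Step 1: Tor against $Q$-modules.} By Lemma~\ref{lem:localtor}, $T_i(X)\cong\Tor_i^Q(X,S^{-1}M)$ for every $Q$-module $X$. Localizing a flat resolution shows $\fd_Q S^{-1}M\leq3$. The ring $Q$ is a zero-dimensional local Gaussian ring, so Lemma~\ref{lem:zero} gives $\fd_Q S^{-1}M\leq2$. Hence $T_i(X)=0$ for all $i\geq3$ and all $Q$-modules $X$; in particular $T_3(K)=T_4(K)=0$. Moreover, $T_i(Q)=0$ for $i\geq1$, since $Q$ is $A$-flat.

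I also need $T_2(K)=0$. The residue field of $Q$ is $K$, and every element of the maximal ideal $N_Q$ of $Q$ is nilpotent, hence a zero divisor. If $\fd_Q S^{-1}M=2$, Proposition~\ref{prop:inputs}(iv) applied to $Q$ gives $\Tor_2^Q(K,S^{-1}M)=0$. If the flat dimension is at most one, this vanishing is automatic. Since Tor commutes with localization (Lemma~\ref{lem:localtor}) and $S^{-1}V=K$, this already gives $S^{-1}H\cong T_2(K)=0$.

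\emph{Step 2: $T_3(V)=0$ and $H\cong T_1(I)$.} Since $B\hookrightarrow Q$ with $Q$ flat over $A$, Lemma~\ref{lem:top} (with $n=3$) gives $T_3(B)=0$. Now take the long exact sequence of the pullback sequence:
\[
 T_3(B)\to T_3(Q)\oplus T_3(V)\to T_3(K)\to T_2(B)\to T_2(Q)\oplus T_2(V)\to T_2(K).
\]
With $T_3(B)=T_3(K)=T_2(K)=0$ and $T_i(Q)=0$, this yields $T_3(V)=0$ and $T_2(B)\cong T_2(V)=H$. From $0\to I\to A\to B\to0$, dimension shifting gives $T_2(B)\cong T_1(I)$ and $T_3(B)\cong T_2(I)$. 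Hence $H\cong T_1(I)$ and $T_2(I)=0$.

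\emph{Step 3: $I\ot_V H=0$.} By Lemma~\ref{lem:kernel}(ii), $NI=0$, so $I$ is a $V$-module. Since $\wgd(V)\leq1$, I apply Proposition~\ref{prop:spectral-details} with $U=I$ and $n=2$. The module $I\ot_V H_2=I\ot_V H$ embeds in $T_2(I)$, which vanishes by Step 2.

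The step I expect to need the most care is Step 1. The argument must pass correctly to $Q$, use Lemma~\ref{lem:zero} to lower the bound on $\fd_Q S^{-1}M$ to two, and then invoke Proposition~\ref{prop:inputs}(iv) for the residue field $K$ of $Q$. It must also handle separately the case where the flat dimension over $Q$ is at most one.
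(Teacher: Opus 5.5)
Your proof is correct. Steps 1 and 3 match the paper: Lemma~\ref{lem:zero} and Proposition~\ref{prop:inputs}(iv) over $Q$ give $T_2(K)=T_3(K)=0$ and $S^{-1}H=0$, and Proposition~\ref{prop:spectral-details} gives $I\ot_V H\subseteq T_2(I)$.

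The genuine difference is in Step 2.

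\emph{The paper's route.} It gets $T_3(V)=0$ directly from $V\hookrightarrow K$ and Lemma~\ref{lem:top}. It then identifies $H$ with $T_1(I)$ by passing through the nilradical:
\begin{itemize}
\item $0\to N\to A\to V\to0$ gives $H\cong T_1(N)$;
\item $0\to N_Q\to Q\to K\to0$ gives $T_1(N_Q)=T_2(N_Q)=0$;
\item Lemma~\ref{lem:kernel}(iii), $0\to I\to N\to N_Q\to 0$, then gives $T_1(I)\cong T_1(N)$.
\end{itemize}
The embedding $B\hookrightarrow Q$ is used only afterwards, and only to obtain $T_2(I)=0$.

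\emph{Your route.} You make the fiber-product sequence of Proposition~\ref{prop:pullback} do the work. The paper states that sequence but remarks that its proof does not use it. Its long exact sequence, with $T_3(B)=T_3(K)=T_2(K)=0$ and $Q$ flat, yields $T_3(V)=0$ and $T_2(B)\cong H$ in one stroke. The single sequence $0\to I\to A\to B\to0$ then delivers both $H\cong T_1(I)$ and $T_2(I)=0$.

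\emph{Comparison.} Your version is slightly more economical and shows that the one module $B=A/I$ controls both facts about $I$. The paper's version avoids the pullback and works directly with the surjectivity $N\to N_Q$. That surjectivity is also what makes your sequence exact, so the two arguments rest on the same inputs. The two isomorphisms $H\cong T_1(I)$ in fact agree, by naturality of connecting maps along $(0\to I\to A\to B\to 0)\to(0\to N\to A\to V\to 0)$. In any case, the later lemmas need only an $A$-linear identification, so nothing downstream is affected.
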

\begin{proof}
By Lemma~\ref{lem:zero}, $\fd_Q M_Q\leq2$. Furthermore
\begin{equation}\label{eq:residueQ}
 \Tor_2^Q(K,M_Q)=0.
\end{equation}
Indeed, this is immediate when $\fd_Q M_Q\leq1$, and otherwise follows from Proposition~\ref{prop:inputs}(iv), since the maximal ideal of $Q$ is nil. The inclusion $V\hookrightarrow K$, Lemmas~\ref{lem:top} and \ref{lem:localtor}, and $\fd_Q M_Q\leq2$ imply $T_3(V)=0$.

The sequences $0\to N\to A\to V\to0$ and $0\to N_Q\to Q\to K\to0$ give
\[
 H\cong T_1(N),\qquad
 T_1(N_Q)=T_2(N_Q)=0.
\]
Apply Tor to Lemma~\ref{lem:kernel}(iii) to obtain $T_1(I)\cong T_1(N)$. Localization of $H$ is the group in \eqref{eq:residueQ}, proving its torsion assertion.

Set $B=A/I$. Since $B\hookrightarrow Q$ and $Q$ is flat over $A$, Lemma~\ref{lem:top} gives $T_3(B)=0$, whence $T_2(I)=0$. Since $NI=0$, the change-of-rings spectral sequence is
\begin{equation}\label{eq:spectral}
 E^2_{p,q}=\Tor_p^V(I,T_q(V))\ \Longrightarrow\ T_{p+q}(I).
\end{equation}
The valuation domain $V$ has weak global dimension at most one. Thus only columns $p=0,1$ occur, all higher differentials are zero, and $E^2_{0,2}=I\ot_V H$ is a subquotient of $T_2(I)=0$.
\end{proof}

\begin{lemma}[Detection away from the nilradical]\label{lem:detection}
There exists $a\in\m\setminus N$ with $T_3(V/\bar aV)\ne0$. In particular $H\ne0$.
\end{lemma}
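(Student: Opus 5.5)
Since $\fd_A M=3$, Proposition~\ref{prop:syzygy-details} gives a finitely generated ideal $J$ with $T_3(A/J)\ne0$. The plan has three steps: enlarge $J$ so that it contains $D$, show the resulting ideal lies outside $N$, and then pass to a principal ideal of $V$. The nonvanishing of $H$ is read off at the end from Proposition~\ref{prop:spectral-details}.

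\emph{Enlarging $J$ and leaving $N$.} Proposition~\ref{prop:inputs}(iii) provides $J'\supseteq J+D$ with $T_3(A/J')\ne0$. We claim $J'\not\subseteq N$.
\begin{enumerate}
\item If $J'\subseteq N$, then $A/J'$ surjects onto $V$ with kernel $N/J'$.
\item Since $J'\supseteq D\supseteq I$, the module $N/J'$ is a quotient of $N/I\cong N_Q$ (Lemma~\ref{lem:kernel}(iii)). Hence it is $S$-divisible in a controlled way.
\item A cleaner route is to argue directly with $U=A/J'$, localizing at $S$. Then $T_3(U)_S=\Tor_3^Q(U_Q,M_Q)=0$ by $\fd_Q M_Q\le2$ (Lemmas~\ref{lem:zero} and \ref{lem:localtor}). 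So every element of $T_3(U)$ is $S$-torsion.
\end{enumerate}
This fact alone does not contradict $J'\subseteq N$. I therefore expect to use the structure differently: $T_3(V)=0$ (Proposition~\ref{prop:H}) together with the sequence $0\to N/J'\to A/J'\to V\to0$ forces $T_3(N/J')\ne0$. I would then try to show $T_3(N/J')=0$ via the surjection $N_Q\cong N/I\to N/J'$, where $N_Q$ is a $Q$-module. This is the main obstacle. Surjections do not transfer top Tor, so I would instead write $N/J'$ as a directed union of cyclic submodules $A\bar x$. Each is isomorphic to $A/(J':x)$ with $(J':x)\supseteq J'$, and by Lemma~\ref{lem:kernel}(i) one can find $s\in S$ in it after adjusting. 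Lemma~\ref{lem:cyclic-torsion} (with $S$-torsion, valid since $T_3(-)_S=0$) reduces to a cyclic $A/J''$ with $J''\cap S\ne\emptyset$ and $T_3(A/J'')\ne0$. That yields the desired ideal meeting $S$ directly, and I would use this lemma as the primary route.

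\emph{Passing to a principal ideal.} With $J''\cap S\ne\emptyset$ and $T_3(A/J'')\ne0$, apply Proposition~\ref{prop:inputs}(iii) again to get $J_1\supseteq J''+D$. Then $\bar J_1$ is a nonzero ideal of $V$, and $A/J_1\cong V/\bar J_1$ because $N\subseteq sA+D$ by Lemma~\ref{lem:kernel}(iv). By Lemma~\ref{lem:valuation-details}(i), $\bar J_1$ is a directed union of principal ideals $\bar aV$, so $V/\bar J_1$ is the directed colimit of the modules $V/\bar aV$. Lemma~\ref{lem:colimit-details} then gives some $\bar a$ with $T_3(V/\bar aV)\ne0$. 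We have $\bar a\ne0$, and $\bar a$ is a nonunit since $T_3(0)=0$, so $a\in\m\setminus N$.

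\emph{Conclusion $H\ne0$.} Apply Proposition~\ref{prop:spectral-details} with $U=V/\bar aV$ and $n=3$. This gives $0\to U\ot_V T_3(V)\to T_3(U)\to\Tor_1^V(U,H)\to0$. Here $T_3(V)=0$ by Proposition~\ref{prop:H}, so $\Tor_1^V(U,H)\cong T_3(U)\ne0$. Hence $H\ne0$.
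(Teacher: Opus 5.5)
\paragraph{The gap: producing an ideal that meets $S$.} Your argument breaks exactly where you need an ideal meeting $S$ with nonzero third Tor. Lemma~\ref{lem:cyclic-torsion} requires the \emph{module} $U$ itself to be $S$-torsion; that is what makes the annihilators of the cyclic subquotients meet $S$. The fact $T_3(-)_S=0$ only says that the Tor groups are $S$-torsion. That holds for every module, because $\fd_QM_Q\le2$, so it cannot single out an ideal meeting $S$.

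Nor is $N/J'$ $S$-torsion in general. It is a quotient of $N/I\cong N_Q$, on which $S$ acts bijectively, so it is $S$-divisible. The decomposition $x=sr+d$ of Lemma~\ref{lem:kernel}(i) gives only $sx=s^2r$, and there is no reason for this to lie in $J'$. For instance, if $I=0$ and $sx\in D$ with $s\in S$, then $x^2\in(0:s^2)\subseteq I=0$, so $N/D$ has no nonzero $S$-torsion at all. Thus the claim that each $(J':x)$ contains an element of $S$ ``after adjusting'' is unsupported, and the branch $J'\subseteq N$ is never actually disposed of.

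\paragraph{The missing idea, and the rest of your proposal.} You need a way to move a nonzero third Tor onto a genuinely $S$-torsion module. The paper does this for any $X$ with $T_3(X)\ne0$; you may take $X=A/J$. The image $Y$ of $X\to X_Q$ embeds in a $Q$-module, whose third Tor vanishes since $\fd_QM_Q\le2$. Lemma~\ref{lem:top} then gives $T_3(Y)=0$, so $T_3(U)\to T_3(X)$ is onto for the $S$-torsion kernel $U$, and Lemma~\ref{lem:cyclic-torsion} applies.

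Your own observation that $T_3(X)$ is $S$-torsion can also be completed. Suppose $s\in S$ kills $0\ne\tau\in T_3(X)$, and factor multiplication by $s$ as $X\twoheadrightarrow sX\hookrightarrow X$. Since $T_3(sX)\to T_3(X)$ is injective, $\tau$ dies in $T_3(sX)$. Hence $\tau$ lifts to $T_3$ of the $s$-torsion submodule $\ker(s\colon X\to X)$.

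Once you have an ideal meeting $S$, the rest of your proposal is sound:
\begin{itemize}
\item after enlarging by $D$, the ideal contains $N$;
\item your colimit over principal ideals of $V$ replaces the paper's colimit over the ideals $aA+D$;
\item deducing $H\ne0$ from Proposition~\ref{prop:spectral-details} with $T_3(V)=0$ is a valid alternative to the paper's isomorphism $T_3(V/\bar aV)\cong(0:_H\bar a)$.
\end{itemize}
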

\begin{proof}
Choose $X$ with $T_3(X)\ne0$. Set $U=\ker(X\to X_Q)$, $Y=\im(X\to X_Q)$, and $C=\operatorname{coker}(X\to X_Q)$. Since $T_3(X_Q)=0$ and $T_4(C)=0$, the sequence $0\to Y\to X_Q\to C\to0$ gives $T_3(Y)=0$. Therefore $0\to U\to X\to Y\to0$ gives $T_3(U)\ne0$.

The module $U$ is $S$-torsion. Tor commutes with filtered colimits, so a finitely generated submodule $U_0$ has nonzero third Tor. A finite filtration obtained by adjoining its generators has cyclic $S$-torsion quotients. The long exact sequences show that at least one such quotient $A/J$ has $T_3(A/J)\ne0$. Its annihilator $J$ meets $S$.

By Proposition~\ref{prop:inputs}(iii), replace $J$ by a larger ideal containing $D$, retaining both properties. Fix $s\in J\cap S$. The ideals between $sA+D$ and $J$ that are finitely generated modulo $D$ form a cofinal directed family in $J$. Since $A/D$ is a chain ring, each is $aA+D$ for some $a\in J$. Such an $a$ cannot belong to $N$, since $s\in aA+D$. Hence
\[
 A/J=\colim_a A/(aA+D)
\]
and some stage has nonzero third Tor. Lemma~\ref{lem:kernel}(iv) identifies that stage with $V/\bar aV$.

Finally, $0\to V\xrightarrow{\bar a}V\to V/\bar aV\to0$ and $T_3(V)=0$ give
\begin{equation}\label{eq:kernelH}
 T_3(V/\bar aV)\cong(0:_H\bar a).
\end{equation}
Thus $H\ne0$.
\end{proof}

\section{Normalization and propagation of highest Tor}\label{sec:normal}

We omit bars on elements acting on $V$-modules when no ambiguity arises.

\begin{lemma}[First localization]\label{lem:first}
After localizing $A$ and $M$ at a prime containing $N$, one may assume that $\fd_A M=3$ and that there is $0\ne\xi\in H$ such that every element of $\m/N$ has a power annihilating $\xi$. Consequently
\begin{equation}\tag{E}\label{eq:E}
 T_3(V/cV)\ne0\quad\text{for every }0\ne c\in\m/N.
\end{equation}
\end{lemma}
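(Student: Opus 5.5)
We start from Lemma~\ref{lem:detection}, which gives $H\ne0$, and more precisely an element $a\in\m\setminus N$ with $T_3(V/\bar aV)\cong(0:_H\bar a)\ne0$. Choose $0\ne h\in(0:_H\bar a)$. Then $\Ann_V(h)\ni\bar a\ne0$ is a nonzero proper ideal of $V$. By Lemma~\ref{lem:valuation-details}(iii) the ideal $\q=\sqrt{\Ann_V(h)}$ is a nonzero prime of $V$. Let $P\supseteq N$ be its inverse image in $A$. We then localize $A$ and $M$ at $P$. The ring $A_P$ is again local Gaussian, with nilradical $N_P$, reduced quotient $V_\q$ and maximal ideal $PA_P$. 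Lemma~\ref{lem:torsion-localization} gives $h/1\ne0$ in $H_\q$, and shows that every element of $\q V_\q=PA_P/N_P$ has a power annihilating $h/1$.

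Next we check that the homological data transfers to the localization. By Lemma~\ref{lem:localtor}, $\Tor_i^{A_P}(V_\q,M_P)\cong(\Tor_i^A(V,M))_P$, so the new $H$ is $H_P=H_\q$ and contains $\xi:=h/1\ne0$. Localization is exact, so $\fd_{A_P}M_P\le3$. To get equality we use Proposition~\ref{prop:syzygy-details}: it suffices to find one nonzero third Tor. Since $\Tor_3^A(V,M)=0$ localizes to $\Tor_3^{A_P}(V_\q,M_P)=0$, the formula \eqref{eq:kernelH} holds over $A_P$ as well. It reads $\Tor_3^{A_P}(V_\q/cV_\q,M_P)\cong(0:_{H_\q}c)$ for every $0\ne c\in V_\q$. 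Taking $c=\bar a/1$, which lies in $\q V_\q$ and is nonzero because $V$ is a domain, we get $\xi\in(0:_{H_\q}c)$. Hence the third Tor is nonzero and $\fd_{A_P}M_P=3$. The remaining hypotheses used so far (Proposition~\ref{prop:H}, Lemma~\ref{lem:kernel}) concern only a local Gaussian ring and a module of flat dimension three, so they are available after renaming $A_P$ as $A$.

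Property \eqref{eq:E} then follows. Take $0\ne c\in\m/N$ and let $m$ be least with $c^m\xi=0$; we have $m\ge1$ since $\xi\ne0$. The element $c^{m-1}\xi$ is nonzero and is killed by $c$, so $(0:_Hc)\ne0$. By \eqref{eq:kernelH}, which holds because $T_3(V)=0$ and $c$ is a nonzerodivisor on $V$, this gives $T_3(V/cV)\ne0$.

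The main obstacle is to make sure that localizing does not destroy flat dimension exactly three. A hasty argument would only give $\le3$. The fix is to carry the nonzero Tor through the identification \eqref{eq:kernelH} at the new ring, and that identification relies on $T_3(V)=0$ surviving localization. A second point needs care: $\q$ must be a prime with $h$ surviving, and it must not be the minimal prime, since at $N$ the module $H$ vanishes. This holds because $\Ann_V(h)\ne0$ forces $\q\ne0$.
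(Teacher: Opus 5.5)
Your proof is correct and follows essentially the paper's route: you localize at the inverse image of $\sqrt{\Ann_V(h)}$ for a nonzero $h\in H$, check that $h$ survives and becomes power-torsion under the new maximal ideal, and obtain \eqref{eq:E} from \eqref{eq:kernelH} using the least power of $c$ that kills $\xi$. The differences are minor. You get $\Ann_V(h)\neq0$ from the element $a$ of Lemma~\ref{lem:detection} rather than from $S^{-1}H=0$. You also certify $\fd=3$ directly via $c=\bar a/1$ and then re-apply Section~\ref{sec:kernel} to the new local ring, whereas the paper deduces $\fd=3$ from \eqref{eq:E} and notes that $N$, $D$ and $I$ localize to the corresponding ideals.
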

\begin{proof}
Choose $0\ne\xi\in H$ and put $\p=\sqrt{\Ann_V(\xi)}$. The radical of a proper ideal in a valuation domain is prime. Moreover $\p\ne0$ because $H$ is torsion. Localize at the inverse image of $\p$. The element $\xi$ survives, since its annihilator is contained in $\p$. Every element of $\p V_{\p}$ has a power killing it.

Localization preserves \eqref{eq:H} and \eqref{eq:kernelH}. For nonzero $c$ in the new maximal ideal, take the least positive integer $r$ with $c^r\xi=0$. Then $c^{r-1}\xi\ne0$ lies in $(0:_H c)$, which proves \eqref{eq:E}. In particular a third Tor remains nonzero, so the localized flat dimension is still three. The nilradical, $D$, and the kernel of localization at the nilradical localize to the corresponding ideals; this follows respectively from localization of nilpotence, square-zero equations, and the exact sequence defining $I$.
\end{proof}

The next elementary observation is the link between the two quotients $A/D$ and $A/N$.

\begin{lemma}[Propagation in a chain ring]\label{lem:chain}
Let $B$ be a chain ring and let $a=cr\ne0$ in $B$. Multiplication by $r$ defines a monomorphism $B/cB\to B/aB$.
\end{lemma}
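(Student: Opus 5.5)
The map is $x+cB\mapsto rx+aB$. Well-definedness comes first: if $x\in cB$, say $x=cy$, then $rx=rcy=ay\in aB$. So there is an $A$-linear map $\varphi\colon B/cB\to B/aB$.

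The content is injectivity. Suppose $rx\in aB$, say $rx=ay=cry$. Then $r(x-cy)=0$, and we must show $x\in cB$. Put $z=x-cy$, so $rz=0$; it suffices to show $z\in cB$. Since $B$ is a chain ring, $zB$ and $cB$ are comparable. If $zB\subseteq cB$ we are done. Otherwise $cB\subsetneq zB$, so $c=zw$ for some $w\in B$. But then
\[
a=cr=zwr=w(rz)=0,
\]
contradicting $a\neq0$. Hence $z\in cB$, so $x\in cB$, and $\varphi$ is injective.

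The only potential obstacle is remembering that $B$ may have zero divisors, so one cannot cancel $r$ in $rx=ay$. The comparability argument above avoids any cancellation. The hypothesis $a\neq0$ is exactly what excludes the case $c\in zB$ with $rz=0$. In particular, the argument does not need the identity $cB\cdot r=aB$, which is immediate anyway.

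In the later application, with $B=A/D$ and $\fd_A M\le 3$, Lemma~\ref{lem:top} turns this monomorphism into an injection $T_3(B/cB)\to T_3(B/aB)$. Nonvanishing of highest Tor therefore propagates from $B/cB$ to $B/aB$ whenever $c$ divides $a\neq0$ in $A/D$.
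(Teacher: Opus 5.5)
Your proof is correct and is essentially the paper's argument. The paper notes that $c\notin(0:r)$ forces $(0:r)\subseteq cB$ by comparability, while you compare $zB$ with $cB$ for the particular element $z=x-cy\in(0:r)$; this uses the chain condition and $a\neq0$ in the same way (minor slip: the map is $B$-linear, not $A$-linear).
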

\begin{proof}
Since $c\notin(0:r)$ and the ideals are comparable, $(0:r)\subseteq cB$. If $rx\in aB=crB$, then $r(x-cy)=0$ for some $y$, and hence $x\in cB$. This is exactly injectivity of the indicated map.
\end{proof}

\begin{proposition}\label{prop:normal}
Assume \eqref{eq:E}. Then
\begin{enumerate}
\item $T_3(A/(aA+D))\ne0$ for every $a\in\m\setminus D$;
\item every element of $\m$ is a zero divisor, and no pair in Proposition~\ref{prop:inputs}(v) exists;
\item $\m^2$ is flat, $\m=\m^2+D=\m^2+I$, and, for $k=A/\m$, $T_2(k)=T_3(k)=0$;
\item $\p=\m/N$ satisfies $\p=\p^2\ne0$ and is not principal.
\end{enumerate}
\end{proposition}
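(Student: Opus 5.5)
The plan is to prove (i) by propagating the nonvanishing in \eqref{eq:E} along the chain ring $A/D$. Each of (ii)--(iv) then follows by contradicting (i) or by invoking Proposition~\ref{prop:inputs}.

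\emph{Step (i).} By Lemma~\ref{lem:first}, $\fd_A M=3$. Since $H$ is $S$-torsion and nonzero (Proposition~\ref{prop:H}, Lemma~\ref{lem:detection}), $\p=\m/N\ne0$, so we may fix $c\in\m\setminus N$. Let $a\in\m\setminus D$; in particular $a\ne0$ in $B=A/D$. The ideals $cB$ and $aB$ are comparable. If $aB\supsetneq cB$ with $c\in aA+D$, we can simply use $a$ itself in place of $c$ when $a\notin N$. So I will split into two cases.

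\emph{Case $a\notin N$.} Lemma~\ref{lem:kernel}(iv) gives $A/(aA+D)\cong V/\bar aV$, and \eqref{eq:E} gives the nonvanishing directly.

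\emph{Case $a\in N\setminus D$.} Here $c\notin aA+D\subseteq N$, so comparability forces $a=cr$ in $B$. Lemma~\ref{lem:chain} then gives a monomorphism $A/(cA+D)\hookrightarrow A/(aA+D)$. Lemma~\ref{lem:top} says $T_3$ preserves monomorphisms, and $T_3(A/(cA+D))\cong T_3(V/\bar cV)\ne0$ by Lemma~\ref{lem:kernel}(iv) and \eqref{eq:E}. Hence $T_3(A/(aA+D))\ne0$.

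\emph{Step (ii).} This is the step I expect to need the most care. Suppose $a\in\m$ is regular. Then $a\notin N$, and by Lemma~\ref{lem:kernel}(i) with $(0:a)=0$ we get $N=aN$. I claim $D\subseteq aD$. Given $d\in D$, write $d=ar$ with $r\in N$. Then $a^2r^2=d^2=0$, and regularity gives $r^2=0$, so $r\in D$. Hence $aA+D=aA\cong$ the image of the injective map $A\xrightarrow{a}A$. The sequence $0\to A\xrightarrow{a}A\to A/aA\to0$ gives $\fd_A(A/aA)\le1$, so $T_3(A/(aA+D))=0$, contradicting (i).

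For the forbidden pair, suppose $(0:d)=aA+D$ with $0\ne d\in D$ and $a\notin D$. Then $a$ is a nonunit, because $d\ne0$, so $a\in\m\setminus D$. Also $A/(aA+D)\cong dA\hookrightarrow A$, so Lemma~\ref{lem:top} forces $T_3(A/(aA+D))=0$, again contradicting (i).

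\emph{Step (iii).} We have $\m\ne D$ because $\m\not\subseteq N$. Combined with (ii), Proposition~\ref{prop:inputs}(v) gives $\m=\m^2+D$ with $\m^2$ flat. Proposition~\ref{prop:inputs}(iv) gives $T_3(k)=0$, and then Lemma~\ref{lem:residue2} gives $T_2(k)=0$.

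For $\m=\m^2+I$, fix $s\in\m\setminus N$. Equation \eqref{eq:division} writes each $x\in N$ as $x=sr+d$ with $r\in N\subseteq\m$ and $d\in(0:s)\subseteq I$. Hence $N\subseteq\m^2+I$. Since $D\subseteq N$, this gives $\m=\m^2+D\subseteq\m^2+I$, and the reverse inclusion is clear.

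\emph{Step (iv).} Reducing $\m=\m^2+D$ modulo $N\supseteq D$ gives $\p=\p^2$, and $\p\ne0$ was noted in Step (i). Suppose $\p=tV$ with $t\ne0$. Then $\p^2=t^2V$. But $t^2V\ne tV$, since $t=t^2u$ would make $t$ a unit in the domain $V$. So $\p$ is not principal.
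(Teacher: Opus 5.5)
Your proof is correct and follows the paper's argument essentially step for step. For (i) you propagate nonvanishing along the chain ring $A/D$ via Lemma~\ref{lem:chain} and Lemma~\ref{lem:top}; for (ii) you contradict (i) and \eqref{eq:E} through the embedding $dA\hookrightarrow A$ and the length-one resolution of $A/aA$; for (iii) you use Proposition~\ref{prop:inputs}(iv),(v), Lemma~\ref{lem:residue2} and \eqref{eq:division}; and for (iv) you use non-idempotence of principal ideals in a domain. The only cosmetic difference is that your detour through $D\subseteq aD$ in (ii) is unnecessary, since $D\subseteq N=aN\subseteq aA$ already gives $aA+D=aA$.
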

\begin{proof}
Part (i) for $a\notin N$ follows from Lemma~\ref{lem:kernel}(iv). If $a\in N\setminus D$, choose $c\in\m\setminus N$. In $B=A/D$, Lemma~\ref{lem:kernel}(iv) implies $\bar a=\bar c r\ne0$ for some $r\in B$. Lemma~\ref{lem:chain} and Lemma~\ref{lem:top} transfer the nonzero third Tor for $B/\bar cB$ to $B/\bar aB$.

If $(0:d)=aA+D$ with $0\ne d\in D$ and $a\notin D$, then $a\in\m$ and $A/(aA+D)\cong dA\hookrightarrow A$. This contradicts (i). If $c\in\m$ were regular, then $c\notin N$, and \eqref{eq:division} would give $N\subseteq cA$. Hence $V/cV\cong A/cA$ would have flat dimension one, contradicting \eqref{eq:E}. This proves (ii).

Proposition~\ref{prop:inputs}(v) now gives $\m^2$ flat and $\m=\m^2+D$. For a fixed $c\in\m\setminus N$, Lemma~\ref{lem:kernel}(i) gives
\[
 N=cN+(0:c)\subseteq\m^2+I.
\]
Since $I\subseteq D\subseteq N$, this proves $\m=\m^2+I$. Part (iv) of Proposition~\ref{prop:inputs} gives $T_3(k)=0$, and Lemma~\ref{lem:residue2} gives $T_2(k)=0$.

Reducing $\m=\m^2+D$ modulo $N$ gives $\p=\p^2$. The first localization has $\p\ne0$. A nonzero proper principal ideal in a domain cannot be idempotent: if $uV=u^2V$, cancellation makes $u$ a unit. Thus $\p$ is not principal.
\end{proof}

\section{Factoring the obstruction and a second localization}\label{sec:factor}

Retain the element $\xi$ of Lemma~\ref{lem:first} and the hypotheses of Proposition~\ref{prop:normal}.

\begin{lemma}[A nonzero divisible expression]\label{lem:factor}
There exist $b\in\m\setminus N$ and $\eta\in H$ with $\xi=b\eta\ne0$.
\end{lemma}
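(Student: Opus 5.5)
The plan is to show that $H=\p H$ as a $V$-module, and then use the valuation-domain structure of $V$ to collapse a finite expression for $\xi$ into a single product. The vanishing of $\Tor_2^A(k,M)$ in Proposition~\ref{prop:normal}(iii) is what should force $\p H=H$. The change-of-rings sequence of Proposition~\ref{prop:spectral-details} turns that vanishing into a statement about $k\ot_V H$.

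\textbf{Step 1.} Since $\m$ contains $N$, the residue field $k=A/\m=V/\p$ is a $V$-module. The ring $V=A/N$ is a valuation domain, so $\wgd(V)\leq 1$ by Lemma~\ref{lem:valuation-details}(ii). I will apply Proposition~\ref{prop:spectral-details} to $A\to V$ with $U=k$ and $n=2$. This gives an exact sequence
\[
 0\longrightarrow k\ot_V H\longrightarrow T_2(k)\longrightarrow \Tor_1^V(k,T_1(V))\longrightarrow 0 .
\]
Proposition~\ref{prop:normal}(iii) gives $T_2(k)=0$, so $k\ot_V H=0$. Since $k\ot_V H\cong H/\p H$, this means $H=\p H$.

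\textbf{Step 2.} Write $\xi=\sum_{i=1}^r b_i\eta_i$ with $b_i\in\p$ and $\eta_i\in H$. By Lemma~\ref{lem:valuation-details}(i), the principal ideals $b_iV$ are linearly ordered. I will choose an index $j$ for which $b_jV$ is the largest of them and write $b_i=b_jv_i$ with $v_i\in V$. Then $\xi=b_j\eta$ for $\eta=\sum_i v_i\eta_i\in H$.

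The case $b_j=0$ cannot occur: it would force every $b_i=0$ and hence $\xi=0$, contradicting Lemma~\ref{lem:first}. So $b_j$ is a nonzero element of $\p=\m/N$. Any lift $b\in\m$ of $b_j$ then lies in $\m\setminus N$, and $\xi=b\eta\ne0$ because the $A$-action on $H$ factors through $V$.

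\textbf{Main obstacle.} The only delicate point is checking that Proposition~\ref{prop:spectral-details} applies with the correct indexing. Here $H_2=\Tor_2^A(V,M)=H$, and $k$ is a genuine $V$-module, so the tensor term $k\ot_V H_2$ appears as a subobject of $\Tor_2^A(k,M)$. The vanishing of $T_2(k)$ was established after the first localization, and that is exactly the setting in which $\xi$ lives. Beyond this, everything reduces to the elementary divisibility in $V$ used in Step 2.
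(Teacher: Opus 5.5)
Your proof is correct, and it takes a shorter route than the paper's.

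The paper obtains $H=\p H$ only when $I/\m I\neq 0$. In that case it tensors the vanishing $I\ot_V H=0$ of Proposition~\ref{prop:H} with the surjection $I\to I/\m I$ onto a nonzero $k$-vector space. It then collapses $\sum b_j\eta_j$ to a single product, exactly as in your Step~2. In the complementary case $I=\m I$ it never proves $H=\p H$. Instead it builds $\eta=cz$ by hand, using three ingredients:
\begin{enumerate}
\item flatness of $\m=\m^2$;
\item the identity $J_\lambda=\m J_\lambda$ for annihilators of elements $\lambda\in I$;
\item an explicit coefficient comparison inside $\ker(I\ot_A L\to I\ot_A F)$.
\end{enumerate}

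You apply Proposition~\ref{prop:spectral-details} with $U=k$ instead of $U=I$. The vanishing $T_2(k)=0$, already established in Proposition~\ref{prop:normal}(iii), then gives $k\ot_V H=0$ in all cases. The case split and the $J_\lambda$ factorization become unnecessary. Your indexing is right: $E^2_{0,2}=k\ot_V H$ is the bottom piece of the two-step column filtration of $T_2(k)$, so it embeds in $T_2(k)$. Also, the $V$-structure on $H$ coming from the first Tor variable agrees with the $A$-action, so $k\ot_V H=H/\p H=H/\m H$.

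What your route buys is that the lemma depends only on $T_2(k)=0$ and $\xi\neq0$, not on $I\ot_V H=0$ or on the structure of $I$. The paper's second case gives a more concrete description of $\eta$ as an element of $I\ot_A L$. Nothing later uses that description: Lemmas~\ref{lem:root} and~\ref{lem:second} only need $\eta\in H$ with $\xi=b\eta\neq0$.
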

\begin{proof}
If $I/\m I\ne0$, the surjection $I\to I/\m I$ and $I\ot_V H=0$ imply
\[
 (I/\m I)\ot_V H=0.
\]
The nonzero module $I/\m I$ is a vector space over $k=V/\p$. It follows that $k\ot_V H=0$, or $H=\p H$. Express $\xi$ as a finite sum $\sum b_j\eta_j$ with $b_j\in\p$. Among the nonzero principal ideals $b_jV$, choose the largest, say $bV$. Write each $b_j=br_j$ and combine the sum. A lift of $b$ lies in $\m\setminus N$.

Suppose now that $I=\m I$. Proposition~\ref{prop:normal} gives $\m=\m^2$, so $\m$ is flat and $\fd_A k\leq1$. If $0\ne\lambda\in I$, put $J_\lambda=(0:\lambda)$. The inclusion $A/J_\lambda\cong\lambda A\hookrightarrow A$ and Lemma~\ref{lem:top}, applied to $k$, give
\[
 0=\Tor_1^A(A/J_\lambda,k)=J_\lambda/\m J_\lambda.
\]
Thus
\begin{equation}\label{eq:annfactor}
 J_\lambda=\m J_\lambda.
\end{equation}

Choose $0\to L\to F\to M\to0$ with $F$ free. The isomorphism $H\cong\Tor_1^A(I,M)$ identifies $H$ with
\[
 \ker(I\ot_A L\longrightarrow I\ot_A F).
\]
Write $\xi$ as a finite sum of elementary tensors, express their first factors using $I=\m I$, and discard coefficients in $N$, which act trivially on $I$. Comparability of the remaining finitely many principal ideals modulo $N$ gives
\[
 \xi=az,\qquad a\in\m\setminus N,\quad z\in I\ot_A L.
\]
If $z$ already belongs to the kernel, this is the required expression.

Otherwise let $\lambda_1,\ldots,\lambda_t\in I\setminus\{0\}$ be the nonzero coordinates of its image in $I\ot_A F$. They are finite in number, and $a\lambda_j=0$. Since $N\subseteq J_{\lambda_j}$, equation \eqref{eq:annfactor}, reduced modulo $N$, expresses $\bar a$ as a finite sum of products of elements of $\p$ with elements of $J_{\lambda_j}/N$. Choose a largest principal ideal among the latter factors. Then
\[
 \bar a=\bar b_j\bar c_j,
 \qquad b_j\in\m,\quad c_j\in J_{\lambda_j}.
\]
Both factors have nonzero image in $V$. Choose a smallest principal ideal among $\bar c_1V,\ldots,\bar c_tV$, and let $c$ be the corresponding $c_j$. As each $J_{\lambda_j}$ contains $N$, the element $c$ belongs to every $J_{\lambda_j}$. For the same chosen index, $a-bc\in N$ with $b\in\m\setminus N$.

Consequently $cz$ maps to zero in $I\ot_A F$, while $\xi=b(cz)$ because $N$ annihilates $I\ot_A L$. Taking $\eta=cz$ finishes the proof.
\end{proof}

\subsection{Why finite coefficient comparisons suffice}

The factorization in Lemma~\ref{lem:factor} uses no basis of $I$ or $H$. Every tensor is a finite sum of elementary tensors even when both tensor factors are infinitely generated. Similarly, an element of $I\ot_A F$, with $F$ a direct sum of copies of $A$, has only finitely many nonzero coordinates. This finite support is what makes it possible to choose largest and smallest principal ideals at the two distinct stages of the proof.

For completeness, the elementary coefficient principle used there is recorded separately.

\begin{lemma}\label{lem:coefficient}
Let $(V,\p)$ be a valuation domain, let $J_1,\ldots,J_t$ be ideals satisfying $J_j=\p J_j$, and let $0\ne a\in\bigcap_jJ_j$. There exist $c\in\bigcap_jJ_j$ and $b\in\p$ with $a=bc$.
\end{lemma}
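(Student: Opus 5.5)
The plan is to reduce to the single smallest ideal among $J_1,\ldots,J_t$ and then factor $a$ using the idempotence relation $J=\p J$. Since $V$ is a valuation domain, its ideals are linearly ordered. Choose $j_0$ with $J_{j_0}\subseteq J_j$ for every $j$, so that $J:=J_{j_0}=\bigcap_j J_j$. It then suffices to find $b\in\p$ and $c\in J$ with $a=bc$.

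Because $a\in J=\p J$, we can write $a=\sum_{i=1}^m p_i y_i$ with $p_i\in\p$ and $y_i\in J$, a finite sum. We then use comparability of principal ideals, as in the proof of Lemma~\ref{lem:valuation-details}(i), and consider two ways of collecting the terms.
\begin{itemize}
\item Among the finitely many ideals $y_iV$, one is largest, say $y_{i_0}V$. Writing $y_i=y_{i_0}r_i$ gives $a=\bigl(\sum_i p_i r_i\bigr)y_{i_0}$. The coefficient $b=\sum_i p_ir_i$ lies in $\p$, and $c=y_{i_0}\in J$.
\end{itemize}
Either grouping works. Note that this is the same "choose a largest principal ideal" step the paper uses in Lemma~\ref{lem:factor}.

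No nondegeneracy issue arises. Since $a\ne0$, both $b$ and $c$ are automatically nonzero in the domain $V$. The finiteness of the sum is what guarantees that a largest principal ideal exists, even when the $J_j$ are not finitely generated.

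The only point needing care is recognizing that the intersection is attained by a single $J_j$. Once that is done, the factorization is immediate from $J=\p J$, so I do not expect a genuine obstacle.
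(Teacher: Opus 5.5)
Your proof is correct, and its key step is the same as the paper's: write $a$ as a finite sum from $\p J$, then pull out a generator of the largest principal ideal among the finitely many $J$-coefficients. The only difference is the order of operations: you first use the total ordering of ideals in $V$ to replace $\bigcap_j J_j$ by the smallest $J_{j_0}$ and factor once, whereas the paper factors $a=b_jc_j$ for each $j$ and then takes the $c_j$ generating the smallest principal ideal, a slightly longer route to the same conclusion (your mention of ``two ways of collecting the terms'' lists only one, but that one suffices).
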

\begin{proof}
For each $j$, write $a=\sum_{\ell=1}^{r_j}b_{j\ell}c_{j\ell}$ with $b_{j\ell}\in\p$ and $c_{j\ell}\in J_j$. Choose a largest principal ideal among the $c_{j\ell}V$, generated by $c_j$. Writing $c_{j\ell}=c_jr_{j\ell}$ gives
\[
 a=c_j\Bigl(\sum_\ell b_{j\ell}r_{j\ell}\Bigr)=c_jb_j,
 \qquad b_j\in\p.
\]
Since $a\ne0$, neither $c_j$ nor $b_j$ is zero. Choose a smallest principal ideal among the finitely many $c_jV$ and call its generator $c$. It belongs to every $J_j$, and the factorization associated with that index gives $a=bc$ with $b\in\p$.
\end{proof}

In Lemma~\ref{lem:factor}, this principle is applied to the ideals $J_{\lambda_j}/N$. The congruence $a-bc\in N$ becomes an exact equality on $I\ot_A L$ because $NI=0$. It is not an equality in $A$ itself, and no cancellation by a zero divisor is being performed.

\begin{lemma}[Torsion in the localization kernel]\label{lem:root}
For $b,\eta$ as above there exist $e\in I$ and an integer $r\geq1$ such that
\begin{equation}\label{eq:root}
 0\ne d=b^2e,\qquad b^rd=0.
\end{equation}
\end{lemma}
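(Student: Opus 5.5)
The plan is to argue by contradiction, splitting $I$ according to how $b$ acts on it. Let
\[
 I'=\{e\in I:\ b^ne=0\text{ for some }n\geq1\}
\]
be the $b$-power torsion of $I$, and put $I''=I/I'$. Since $NI=0$, both $I'$ and $I''$ are $V$-modules. The multiplication by $b$ on $I''$ is injective by construction. Suppose the conclusion fails. Then for every $e\in I'$ we must have $b^2e=0$: otherwise $d=b^2e$ is nonzero and is killed by a power of $b$, which is exactly \eqref{eq:root}. So the standing assumption is $b^2I'=0$, and the goal is to show this forces $\xi=0$.

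\textbf{Step 1: $b$ kills $I'$ and acts bijectively on $I''$.} This uses Lemma~\ref{lem:kernel}(iv), namely $bI=b^2I$. Take $i\in I'$ and write $bi=b^2e$ with $e\in I$. Then $b^{n+2}e=b^{n+1}i=0$ for large $n$, so $e\in I'$. Hence $bI'=b^2I'=0$. Passing to the quotient gives $bI''=b^2I''$. For $x\in I''$ write $bx=b^2y$; injectivity of $b$ on $I''$ gives $x=by$. Thus $b$ is surjective, hence bijective, on $I''$.

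\textbf{Step 2: apply Tor.} Apply $T_\bullet=\Tor_\bullet^A(-,M)$ to $0\to I'\to I\to I''\to0$. This gives an exact sequence
\[
 T_1(I')\longrightarrow T_1(I)\longrightarrow T_1(I'').
\]
Multiplication by $b$ on each Tor group is induced by multiplication by $b$ on the first variable. It is therefore zero on $T_1(I')$ and an automorphism of $T_1(I'')$. Identify $H$ with $T_1(I)$ by Proposition~\ref{prop:H}.

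By Lemma~\ref{lem:first}, some power of $b$ kills $\xi$: say $b^m\xi=0$. Then $b^{m+1}\eta=0$. So the image of $\eta$ in $T_1(I'')$ is killed by the automorphism $b^{m+1}$, and hence is zero. By exactness, $\eta$ comes from $T_1(I')$, on which $b$ acts as zero. By naturality, $\xi=b\eta=0$. This contradicts Lemma~\ref{lem:factor}.

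It follows that some $e\in I'$ has $d=b^2e\neq0$, and by the definition of $I'$ some $b^r$ kills $e$, and hence kills $d$.

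\textbf{Main obstacle.} I expect the delicate point to be the justification that the $A$-action of $b$ on $T_1(X)$ coincides with the map induced by $b\cdot$ on $X$, together with the naturality of the connecting sequence. Both are standard: Tor is $A$-bilinear, and the action can be computed through either variable. The other point needing care is applying Lemma~\ref{lem:first} to the specific element $b\in\m\setminus N$. After the first localization, $b$ has nonzero image in $\m/N$, so the hypothesis that every element of $\m/N$ has a power annihilating $\xi$ applies to it.
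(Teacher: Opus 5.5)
Your proof is correct and is essentially the paper's argument. Under the negated conclusion, $bI=b^2I$ splits $I$ into a part killed by $b$ and a part on which $b$ acts bijectively, and applying $\Tor_1^A(-,M)$ rules out the $b$-power-torsion element $\eta$ with $b\eta\neq0$. The only difference is cosmetic: you use the exact sequence $0\to I'\to I\to I/I'\to0$ and exactness in the middle, whereas the paper writes the direct sum $I=(0:_Ib)\oplus bI$; these coincide here, since $I'=(0:_Ib)$ and $I/I'\cong bI$ under your hypothesis.
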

\begin{proof}
Since $\xi=b\eta$ is killed by a power of $b$, so is $\eta$, but $b\eta\ne0$. Suppose $bI$ had no nonzero $b$-power-torsion element. Multiplication by $b$ on $bI$ is then injective, and Lemma~\ref{lem:kernel}(iv) makes it surjective. It follows that
\[
 I=(0:_I b)\oplus bI.
\]
Indeed, for $x\in I$, choose $y\in bI$ with $by=bx$; then $x-y\in(0:_I b)$, and the intersection is zero. Applying $\Tor_1^A(-,M)$, the first summand is annihilated by $b$ and multiplication by $b$ is invertible on the second. Thus every $b$-power-torsion element of $H\cong T_1(I)$ is annihilated by $b$, contradicting $b\eta\ne0$.

Choose a nonzero $b$-power-torsion element $d\in bI$. The equality $bI=b^2I$ gives the asserted expression.
\end{proof}

\begin{lemma}[Second localization with preservation of nonvanishing]\label{lem:second}
After another localization and relabeling, \eqref{eq:E} and all the conclusions of Proposition~\ref{prop:normal} still hold, and there are $b\in\m\setminus N$, $e\in I$, and $0\ne d=b^2e$ such that every $x\in\m$ has a power annihilating $d$.
\end{lemma}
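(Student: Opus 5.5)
We take $b,e,d$ as in Lemma~\ref{lem:root} and localize at a prime chosen from the annihilator of $d$, much as in Lemma~\ref{lem:first}. Since $d\in I$ and $NI=0$, the ideal $\Ann_A(d)$ contains $N$. Its image $\mathfrak a=\Ann_A(d)/N$ is an ideal of $V$, and it is nonzero because $b^r\in\Ann_A(d)$ with $b\notin N$. It is also proper, since $d\ne0$. By Lemma~\ref{lem:valuation-details}(iii), $\q=\sqrt{\mathfrak a}$ is a nonzero prime of $V$ containing $\bar b$. Let $P$ be its inverse image in $A$; then $P\supseteq N$, and $b\in P\setminus N$. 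We localize $A$ and $M$ at $P$.

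\textbf{Survival of $d$ and power annihilation.} The argument is that of Lemma~\ref{lem:torsion-localization}, applied to $I$ regarded as a $V$-module. If $s\notin P$ killed $d$, then $\bar s\in\mathfrak a\subseteq\q$, which is impossible. Hence $d/1\ne0$, and the equality $d=b^2e$ persists with $e/1\in I_P$. Every element $x/s$ of $PA_P$ has $x\in P$, so $\bar x^m\in\mathfrak a$ for some $m$; that is, $x^md=0$ and $(x/s)^md/1=0$. As in Lemma~\ref{lem:first}, the ideals $N$, $D$ and $I$ localize to the corresponding ideals of $A_P$, because $P\supseteq N$ and localization is exact. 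Also $b$ stays outside the nilradical, since $\bar b\ne0$ in the domain $V_{\q}$.

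\textbf{Preserving \eqref{eq:E} and the nonvanishing of Tor; the main obstacle.} This is the main obstacle. It suffices to show that $\xi$, or some nonzero element of $H$, survives in $H_P$ and is power-torsion under $\q V_{\q}$. Once that holds, \eqref{eq:E} follows from \eqref{eq:kernelH} after localization, exactly as in Lemma~\ref{lem:first}. The flat dimension then remains three, Proposition~\ref{prop:normal} reapplies verbatim, and relabeling finishes the proof.

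Power-torsion is automatic. Before the second localization every element of $\p$ has a power killing $\xi$, and $\q\subseteq\p$, so every element of $\q V_{\q}$ has a power killing $\xi/1$. Survival is the real issue. By the last sentence of Lemma~\ref{lem:torsion-localization}, $\xi$ survives at $\q$ provided $c\xi\ne0$ for some $c\in\q$, or provided $\xi$ itself is not killed by any $s\notin\q$.

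The plan is to link $\Ann(\xi)$ to $\Ann(d)$ through Lemma~\ref{lem:root}. There $d$ was chosen as a nonzero $b$-power-torsion element of $bI$, and its existence was forced by the $b$-power-torsion element $\eta$ of $T_1(I)$ with $b\eta\ne0$. I will redo that splitting argument after localizing at $\q$. Suppose $\xi/1=0$. Then every element of $bI_P$ that is $b$-power-torsion vanishes in $I_P$, which contradicts the survival of $d$ only if $\xi$ is tied to $d$.

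Since this direct link may fail, I would also choose $d$ more carefully as a fallback. Among the elements $\xi$ and $d$, we choose the prime to be the larger of $\sqrt{\Ann(\xi)}$ and $\sqrt{\mathfrak a}$; these primes of $V$ are comparable. Then the element with the larger radical survives by Lemma~\ref{lem:torsion-localization}. If the surviving element is $d$, localizing at the larger prime still makes every $x\in\m$ power-annihilate $d$: by Lemma~\ref{lem:valuation-details}(iv), elements outside the smaller prime divide elements inside it. The property then transfers via the already-established power-torsion of $\xi$ under the first maximal ideal.

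If $\xi$ dies, we instead use the localized Lemma~\ref{lem:detection}. It gives that $H_P\ne0$, because the third Tor of $V/\bar bV$ survives, and then Lemma~\ref{lem:first} is rerun at the prime $\q$. The step whose details I expect to require the most care is verifying that this rerun does not destroy $d$, that is, that $d/1$ remains nonzero and power-torsion after the second pass.
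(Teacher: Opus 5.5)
Your choice of $\q=\sqrt{\Ann_V(d)}$, the survival of $d=b^2e$, its power-annihilation by the new maximal ideal, and the behaviour of $N$, $D$, $I$ under localization all agree with the paper. The gap is the preservation of \eqref{eq:E}, and your main route cannot close it. At this stage nothing has been localized since Lemma~\ref{lem:first}, so $\sqrt{\Ann_V(\xi)}=\p$. If $\q\subsetneq\p$, pick $s\in\p\setminus\q$. Some power $s^n$ kills $\xi$ and lies outside $\q$, so $\xi/1=0$ in $H_P$. Thus $\xi$ survives exactly when the second localization is trivial; the paper remarks that $\xi$ need not survive.

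Your first fallback fails for the same reason. The larger of $\sqrt{\Ann_V(\xi)}$ and $\q$ is always $\p$, so you do not localize at all. Then elements of $\p\setminus\q$ have no power killing $d$, by the definition of $\q$. Your second fallback applies Lemma~\ref{lem:detection} at $P$, which presupposes $\fd_{A_P}M_P=3$, the very nonvanishing to be proved. It also asserts without argument that $T_3(V/\bar bV)$ survives. Finally, rerunning Lemma~\ref{lem:first} at $\sqrt{\Ann(\xi')}$ for some $\xi'\in H_P$ may give a prime strictly inside $\q$, and there $d$ dies. You flag this yourself as unresolved.

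The missing idea is that \eqref{eq:E} does not need one element of $H$ that is power-torsion under the whole new maximal ideal. It suffices to produce, for each $0\ne c\in\q$, a nonzero element of $(0:_{H_P}c)$. The factorization $\xi=b\eta$ from Lemma~\ref{lem:factor} supplies one.

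\textbf{Case $c\in bV$.} Write $c=bw$. Then $c^{n+1}\eta=wc^n\xi=0$ for large $n$. Moreover $\eta$ survives at $\q$: every $s\notin\q$ divides $b\in\q$ by Lemma~\ref{lem:valuation-details}(iv), so $s\eta=0$ would force $\xi=b\eta=0$.

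\textbf{Case $c\notin bV$.} Write $b=cu$ and use $u\eta$. One has $c(u\eta)=\xi\ne0$. The element $u\eta$ is $c$-power torsion because $\xi$ is, since $c$ lies in the original maximal ideal. It survives because every $s\notin\q$ divides $c$.

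In either case, the last nonzero $c$-power multiple gives $(0:_{H_P}c)\ne0$. The localized \eqref{eq:kernelH} then gives $\Tor_3^{A_P}(V_{\q}/cV_{\q},M_P)\ne0$. Every element of $\q V_{\q}$ is $c/s$ with $c\in\q$ and $s$ a unit, so this is \eqref{eq:E}. Flat dimension three and Proposition~\ref{prop:normal} then follow as you indicated.
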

\begin{proof}
Before localization put $\q=\sqrt{\Ann_V(d)}$. This is a proper prime containing the nonzero element $b$. Localize at its inverse image in $A$. The element $d$ survives. Elements of $N$ already annihilate $d$, and every element of the new maximal ideal modulo $N$ has a power killing it.

We verify \eqref{eq:E} instead of assuming that a previously nonzero Tor element survives. It suffices to take $0\ne c\in\q\subseteq V$, since denominators outside $\q$ are units. If $c\in bV$, use $\eta$. It is killed by a power of $c$. It survives at $\q$: if $s\notin\q$ and $s\eta=0$, then comparability in $V$ gives $b\in sV$, forcing $b\eta=0$.

In the other case write $b=cu$ and use $\eta_c=u\eta$. One has $c\eta_c=\xi\ne0$. The original normalization says that a power of $c$ kills $\xi$, since $c$ belongs to the original maximal ideal. Thus $\eta_c$ is $c$-power torsion. If $s\notin\q$ killed $\eta_c$, then $c\in sV$ would force $c\eta_c=0$, a contradiction. Hence $\eta_c$ also survives.

In both cases the localized $H$ contains a nonzero $c$-power-torsion element, and therefore a nonzero element killed by $c$. The localized form of \eqref{eq:kernelH} gives \eqref{eq:E}. The flat dimension consequently remains three. Proposition~\ref{prop:normal} applies afresh to the localized ring and module. Equation \eqref{eq:root} persists, with $d\ne0$. Notice that this argument does not assert that $\xi$ itself survives the second localization.
\end{proof}

\section{The annihilator factorization and the local contradiction}\label{sec:limit}

We now work with the data after Lemma~\ref{lem:second}. Put $\p=\m/N$ and $k=A/\m$. Thus $\p=\p^2\ne0$, $\p$ is not principal, $T_2(k)=0$, and $0\ne d=b^2e$ with $e\in I$ is power-torsion under every element of $\m$.

\begin{lemma}\label{lem:transition}
Suppose
\[
 bV\subseteq u_1V\subsetneq u_2V\subsetneq\p.
\]
Then the natural map
\[
 T_3(A/(bu_1A+N))\longrightarrow T_3(A/(bu_2A+N))
\]
is zero. Representatives $b,u_1,u_2$ are chosen in $A$.
\end{lemma}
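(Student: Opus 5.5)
The plan is to turn the map into an explicit endomorphism of $H$ and then prove that this endomorphism vanishes on the relevant annihilator submodule. For $x\in\m\setminus N$, Lemma~\ref{lem:kernel}(iv) gives $A/(xA+N)\cong V/xV$. The map in the statement is the map induced by the surjection $V/bu_1V\to V/bu_2V$.

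Since $u_1V\subsetneq u_2V$, write $u_1=u_2w$ with $w\in\p$; then $bu_1=(bu_2)w$. The surjection fits into a morphism of short exact sequences
\[
 \bigl(0\to V\xrightarrow{bu_1}V\to V/bu_1V\to0\bigr)\longrightarrow
 \bigl(0\to V\xrightarrow{bu_2}V\to V/bu_2V\to0\bigr),
\]
with middle vertical map the identity and left vertical map multiplication by $w$. Naturality of the connecting maps, together with $T_3(V)=0$ and the identification \eqref{eq:kernelH}, shows that the map in the statement is
\[
 (0:_H bu_1)\xrightarrow{\;w\;}(0:_H bu_2).
\]
Hence the lemma reduces to the following claim: if $h\in H$ and $bu_2wh=0$, then $wh=0$. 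In words, on $H$ the factor $bu_2$ can be cancelled from $w$ whenever $u_2V\subsetneq\p$. I would check this reduction carefully first, since it is purely formal.

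To prove the claim, I would use $\p=\p^2$, which makes $\p$ non-principal, to choose $u_3$ with $u_2V\subsetneq u_3V\subsetneq\p$ (Lemma~\ref{lem:valuation-details}(v)). This leaves room to factor $w$ once more. Then I would bring in the two vanishing statements $T_2(k)=T_3(k)=0$. Write $\p$ as the directed union of the principal ideals $uV\cong V$ and apply Lemma~\ref{lem:colimit-details}. Together with $T_3(V)=0$, the vanishing of $T_3(k)$ and $T_2(k)$ should say that the direct system of copies of $H$, with transition maps multiplication by the ratios $u/u'$, has colimit mapping isomorphically onto $H$. I would exploit this as an injectivity statement: if $wh$ becomes zero after multiplication by a suitable element of $\p$, then $wh$ was already zero.

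This alone does not cover the factor $b$. For $b$ I would use the torsion element $0\ne d=b^2e$ with $e\in I$ from Lemma~\ref{lem:second}, together with $I\otimes_V H=0$ and $H\cong T_1(I)$. The aim is to show that multiplication by $b$ is injective on the relevant $b$-divisible part of $H$. One way would be to rerun the splitting argument of Lemma~\ref{lem:root} in reverse, on the cyclic submodule $dA\cong A/(0:d)$, whose annihilator contains $N$ and satisfies \eqref{eq:annfactor}-type factorization.

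The main obstacle is this cancellation step: showing that an element killed by $bu_2w$ is already killed by $w$. The formal reduction above is routine. The real content is combining the colimit interpretation of $T_2(k)=T_3(k)=0$ with the power-torsion element $d$ so as to cancel the extra factor $bu_2$. If the direct cancellation fails, my first fallback is to show instead that the composite map factors through $T_3$ of a module embedding into $dA\hookrightarrow A$, which Lemma~\ref{lem:top} kills.
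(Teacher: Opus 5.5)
You have a genuine gap: the cancellation step, which is the entire content of the lemma, is not proved, and the tools you propose cannot prove it.

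Your formal reduction is correct. With $u_1=u_2w$ in $V$, naturality of the connecting maps and $T_3(V)=0$ identify the map with $(0:_H bu_1)\xrightarrow{w}(0:_H bu_2)$. So the lemma is equivalent to $w\cdot(0:_H bu_1)=0$.

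The isomorphism $T_2(\p)=\colim_u T_2(uV)\to H$ coming from $T_2(k)=T_3(k)=0$ does not say that an element killed by an element of $\p$ was already zero. That would contradict \eqref{eq:E}, which via \eqref{eq:kernelH} gives $(0:_H c)\neq0$ for every nonzero $c\in\p$. Injectivity only says that $ug=0$ implies $(u/u')g=0$ for some $u'$ with $uV\subseteq u'V\subsetneq\p$. Equivalently, $\Ann_V(g)=\p\Ann_V(g)$ for each nonzero $g\in H$, and surjectivity gives $H=\p H$. This lets you divide an annihilating element by \emph{some} element of $\p$, never by the prescribed factor $bu_2$, not even by $u_2$.

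Here is a module showing this. Take $V$ with value group $\mathbb Q$ and $J=\{0\}\cup\{v>\alpha\}$. The module $K/J$ satisfies both properties. Yet if $v(u_1)-v(u_2)\le\alpha<v(bu_1)$, the class of $1$ is killed by $bu_1$ but not by $w$.

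Your plan for the factor $b$ is also unsupported. Before the second localization, $\xi=b\eta$ is a nonzero $b$-divisible element killed by a power of $b$, so $b$ is not injective on $bH$. Lemma~\ref{lem:root} is built on exactly this non-injectivity. Also, $T_2(k)$, $T_3(k)$ and the non-principality of $\p$ play no role in this lemma; they enter only in Proposition~\ref{prop:local}.

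The missing idea is ring-theoretic: apply the Gaussian square property to $u_1,u_2$.
\begin{enumerate}
\item Since $u_1V\subsetneq u_2V$, one must have $(u_1,u_2)^2=u_2^2A$. Hence $u_1=xu_2+\delta$ with $u_2\delta=0$, and then $x\in\m\setminus N$ and $\delta\in I$.
\item Since $b\in u_2A+N$ and $N\delta=0$, one gets $b\delta=0$. This yields the key relation $bu_1=xbu_2$ with $x\in\m$.
\item Write $b=u_2v+n$ with $n\in N$. Using $Ne=0$, one has $d=b^2e=bu_2(ve)$, and some power of $x$ kills $d$ by Lemma~\ref{lem:second}.
\item Let $m$ be minimal with $x^md=0$. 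Then $\lambda=x^{m-1}ve\in I$ satisfies $bu_1\lambda=x^md=0$ and $bu_2\lambda=x^{m-1}d\neq0$. So $(0:bu_2)\subsetneq(0:bu_1)$.
\item Because $N\lambda=0$, the ideal $(0:\lambda)$ is the preimage of an ideal of $V$. Comparability then gives $bu_1A+N\subseteq(0:\lambda)\subseteq bu_2A+N$.
\item The projection therefore factors through $A/(0:\lambda)\cong\lambda A\hookrightarrow A$, whose third Tor vanishes by Lemma~\ref{lem:top}.
\end{enumerate}

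Your fallback of factoring through a module that embeds in $A$ has the right shape. But without the relation $bu_1=xbu_2$ with $x\in\m$, you have no way to produce the sandwiched annihilator. The module actually needed is $\lambda A$ for some $\lambda\in(0:bu_1)\setminus(0:bu_2)$; the element $d$ enters only to show that such a $\lambda$ exists.
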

\begin{proof}
The Gaussian square property for $u_1,u_2$ must give $(u_1,u_2)^2=u_2^2A$. Otherwise reduction modulo $N$ would imply $u_2^2\in u_1^2V$, contradicting $u_1V\subsetneq u_2V$. Thus $u_1u_2=xu_2^2$ for some $x\in A$. Write
\[
 u_1=xu_2+\delta,\qquad u_2\delta=0.
\]
Reduction modulo $N$ shows $x\in\m\setminus N$. Also $\delta\in I$. Since $b\in u_2A+N$ and $N\delta=0$, one has $b\delta=0$, whence
\begin{equation}\label{eq:bu}
 bu_1=xbu_2.
\end{equation}
It follows that $(0:bu_2)\subseteq(0:bu_1)$.

This inclusion is strict. If the annihilators were equal, multiplication by $x$ would be injective on $bu_2A$: for $xbu_2r=0$, equation \eqref{eq:bu} implies $r\in(0:bu_1)=(0:bu_2)$. On the other hand $d\in bu_2A$. Indeed, write $b=u_2v+n$ with $n\in N$; since $Ne=0$,
\[
 d=b^2e=bu_2ve.
\]
Some power of $x\in\m$ kills the nonzero element $d$, contradicting injectivity on the ideal $bu_2A$.

Choose $\lambda\in(0:bu_1)\setminus(0:bu_2)$. As $bu_1\notin N$, this element belongs to $I$, and $N\lambda=0$. Consequently $(0:\lambda)$ contains $N$, and its image is an ideal of $V$. Comparability of ideals of $V$ and $bu_2\notin(0:\lambda)$ give
\begin{equation}\label{eq:sandwich}
 bu_1A+N\subseteq(0:\lambda)\subseteq bu_2A+N.
\end{equation}
The projection between the cyclic modules therefore factors through
\[
 A/(0:\lambda)\cong\lambda A\hookrightarrow A.
\]
Lemma~\ref{lem:top} makes the third Tor of this intermediate module zero, proving the claim.
\end{proof}

\begin{proposition}[Local exclusion]\label{prop:local}
No local Gaussian ring admits a module of flat dimension three.
\end{proposition}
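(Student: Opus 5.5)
Argue by contradiction. Suppose a local Gaussian ring $A$ has a module $M$ with $\fd_A M=3$. Apply Lemma~\ref{lem:first}, then Lemmas~\ref{lem:factor}, \ref{lem:root} and \ref{lem:second}. After these localizations we may assume the following. Condition \eqref{eq:E} holds, and all conclusions of Proposition~\ref{prop:normal} hold. In particular $\p=\m/N=\p^2\neq0$ and $\p$ is not principal. There are also $b\in\m\setminus N$, $e\in I$ and $0\ne d=b^2e$ such that every element of $\m$ is power-nilpotent on $d$. This is exactly the setting of Lemma~\ref{lem:transition}.

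The idea is to write a cyclic module with nonzero third Tor as a filtered colimit whose transition maps are eventually zero on $T_3$. By Lemma~\ref{lem:valuation-details}(v), applied to the nonzero element $\bar b\in\p$, the principal ideals $uV$ with $bV\subseteq uV\subsetneq\p$ form a directed family with union $\p$ and no largest member. Hence $b\p$ is the directed union of the ideals $buV$, so
\[
 A/(b\m+N)\cong V/b\p=\colim_{u} V/buV=\colim_u A/(buA+N).
\]
For any stage $u_1$, pick $u_2$ with $u_1V\subsetneq u_2V\subsetneq\p$; this is possible because the family has no largest member. Lemma~\ref{lem:transition} says the transition map on $T_3$ from stage $u_1$ to stage $u_2$ is zero. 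Lemma~\ref{lem:colimit-details}, together with the commutation of Tor with filtered colimits, then gives $T_3(V/b\p)=0$.

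Next we show $T_3(V/b\p)\neq0$. Consider the exact sequence
\[
 0\longrightarrow bV/b\p\longrightarrow V/b\p\longrightarrow V/bV\longrightarrow 0.
\]
Here $bV/b\p\cong V/\p=k$, since $V$ is a domain. The long exact sequence gives
\[
 T_3(V/b\p)\to T_3(V/bV)\to T_2(k).
\]
By Proposition~\ref{prop:normal}(iii) we have $T_2(k)=0$. By \eqref{eq:E} we have $T_3(V/bV)\ne0$. So $T_3(V/b\p)$ surjects onto a nonzero group and is nonzero. This contradicts the previous paragraph, so no such $M$ exists.

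The main obstacle is checking that the colimit identification really happens over $A$, with the correct cyclic modules. Each stage $A/(buA+N)$ must be the module appearing in Lemma~\ref{lem:transition}, with representatives chosen compatibly. The transition maps must be the natural projections. The union of the ideals $buA+N$ must equal $b\m+N$. The last point uses that $\p$ is the union of the $uV$ and that $N\subseteq b\m+N$ trivially. Beyond this, the argument depends on the hypotheses of Lemma~\ref{lem:transition}, namely $bV\subseteq u_1V$ and $u_2V\subsetneq \p$, holding along a cofinal part of the system. That is guaranteed by starting the family at $bV$ as in Lemma~\ref{lem:valuation-details}(v).
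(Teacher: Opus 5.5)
Your proposal is correct and follows the paper's proof step for step. It uses the same filtered colimit $A/(b\m+N)=\colim_u A/(buA+N)$, whose third Tor is killed by Lemma~\ref{lem:transition}, and the same exact sequence $0\to k\to V/b\p\to V/bV\to0$ together with $T_2(k)=0$ and \eqref{eq:E}; the only cosmetic difference is that you state the contradiction as $T_3(V/b\p)\neq0$, whereas the paper deduces $T_3(V/bV)=0$ and contradicts \eqref{eq:E}.
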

\begin{proof}
Suppose such a module exists. The preceding sections supply the normalized data. The principal ideals $uV$ satisfying
\[
 bV\subseteq uV\subsetneq\p
\]
form a directed family with union $\p$. Indeed, comparability gives directedness and cofinality, and no element generates $\p$. Each member is properly contained in another member. Passing to inverse images in $A$ gives
\begin{equation}\label{eq:limit}
 A/(b\m+N)=\colim_u A/(buA+N).
\end{equation}
For every stage, Lemma~\ref{lem:transition} supplies a later stage at which the induced third Tor map is zero. Since Tor commutes with filtered colimits,
\[
 T_3(A/(b\m+N))=0.
\]
In the domain $V$, multiplication by $b$ identifies $V/\p$ with $bV/b\p$. Hence there is an exact sequence
\[
 0\longrightarrow k\longrightarrow A/(b\m+N)
 \longrightarrow A/(bA+N)\longrightarrow0.
\]
The vanishing just proved and $T_2(k)=0$ imply $T_3(A/(bA+N))=0$. But $A/(bA+N)\cong V/bV$, contradicting \eqref{eq:E}.
\end{proof}

\subsection{The maps in the final exact sequence}

We describe the last short exact sequence at the level of elements. This makes explicit the final use of cancellation in the domain $V$. The maps are
\[
 \begin{aligned}
 \alpha:V/\p&\longrightarrow V/b\p,&
       v+\p&\longmapsto bv+b\p,\\
 \beta:V/b\p&\longrightarrow V/bV,&
       v+b\p&\longmapsto v+bV.
 \end{aligned}
\]
The first map is well defined because $b\p$ contains $b(v-v')$ whenever $v-v'\in\p$. If $bv\in b\p$, cancellation of the nonzero element $b$ in $V$ gives $v\in\p$, so $\alpha$ is injective. The second map is surjective, and its kernel is $bV/b\p$, exactly the image of $\alpha$. Thus
\[
 0\longrightarrow V/\p\xrightarrow{\alpha}V/b\p
 \xrightarrow{\beta}V/bV\longrightarrow0
\]
is exact. Under the quotient maps from $A$, these are precisely the three modules in the last paragraph of Proposition~\ref{prop:local}.

The relevant portion of the long exact sequence is
\[
 \Tor_3^A(V/b\p,M)\longrightarrow\Tor_3^A(V/bV,M)
 \longrightarrow\Tor_2^A(V/\p,M).
\]
The outside terms vanish for different reasons: the first by the filtered-colimit argument and the second by flatness of $\m^2$ together with highest residue-field Tor vanishing. Neither of these reasons alone suffices. Their combination forces the middle term to vanish, in direct conflict with the normalization condition (E).

\section{The global theorem and consequences}\label{sec:global}

\begin{proof}[Proof of Theorem~\ref{thm:main}, apart from sharpness]
For a local Gaussian ring $A$, a module of finite flat dimension $n>3$ has an $(n-3)$rd syzygy of flat dimension exactly three. Proposition~\ref{prop:local} therefore gives $\Fwd(A)\leq2$.

Let $R$ be Gaussian and $M$ an $R$-module of finite flat dimension. Each $R_{\m}$ is Gaussian and $\fd_{R_{\m}}M_{\m}\leq\fd_R M$, so the local result gives $\fd_{R_{\m}}M_{\m}\leq2$ for every maximal ideal $\m$. Take a free presentation through degree one,
\[
0\longrightarrow L\longrightarrow F_1\longrightarrow F_0\longrightarrow M\longrightarrow0.
\]
Then every $L_{\m}$ is flat. Flatness is local, so $L$ is flat and $\fd_R M\leq2$.
\end{proof}

\begin{corollary}\label{cor:values}
If $R$ is Gaussian and $M$ is any $R$-module, then
\[
 \fd_R M\in\{0,1,2,\infty\}.
\]
For a nonzero Gaussian ring, $\Fwd(R)\in\{0,1,2\}$.
\end{corollary}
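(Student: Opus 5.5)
The first assertion follows from the global part of Theorem~\ref{thm:main} proved just above. If $\fd_R M$ is finite, that argument gives $\fd_R M\leq2$. Otherwise $\fd_R M=\infty$. Hence $\fd_R M\in\{0,1,2,\infty\}$, where the value $0$ also covers the zero module by our convention.

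For the second assertion, the set of finite flat dimensions is nonempty. The module $R$ itself is flat, so $0$ belongs to it. Therefore $\Fwd(R)$ is a supremum of a nonempty subset of $\{0,1,2\}$. It is thus a well-defined integer in $\{0,1,2\}$, not $-\infty$ and not $\infty$.

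The nonzero hypothesis is only needed to avoid degenerate conventions for the zero ring. Over the zero ring every module is zero. Some authors might then assign the supremum of an empty set, so the hypothesis sidesteps that issue.

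There is no real obstacle here. The only point to check is that the reduction in the global proof applies to every module of finite flat dimension, with no finiteness assumption on $\wgd(R)$ and no finite generation of $M$. This holds because Proposition~\ref{prop:local} is stated for arbitrary modules. Passage to syzygies uses Proposition~\ref{prop:syzygy-details}, and localization preserves flatness and the Gaussian property.
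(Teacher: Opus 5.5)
Your proposal is correct and matches how the paper treats this corollary, which it states without separate proof as an immediate consequence of Theorem~\ref{thm:main}: finite flat dimension forces $\fd_R M\leq2$, and since $R$ is flat, $\Fwd(R)$ is the supremum of a nonempty subset of $\{0,1,2\}$. As a minor aside, under the paper's convention that the zero module has flat dimension zero, even the zero ring would give $\Fwd=0$, so the nonzero hypothesis only serves to avoid degenerate conventions.
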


\begin{corollary}\label{cor:criterion}
Let $R$ be Gaussian and $M$ an $R$-module. The following are equivalent:
\begin{enumerate}
\item $M$ has finite flat dimension;
\item $\fd_R M\leq2$;
\item $\Tor_3^R(R/J,M)=0$ for every finitely generated ideal $J$ of $R$.
\end{enumerate}
\end{corollary}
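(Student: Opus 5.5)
The plan is to prove the cycle of implications (i)$\Rightarrow$(ii)$\Rightarrow$(iii)$\Rightarrow$(i). Each step should reduce quickly to Theorem~\ref{thm:main} and Proposition~\ref{prop:syzygy-details}.

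For (i)$\Rightarrow$(ii), I apply Theorem~\ref{thm:main} directly. If $\fd_R M<\infty$, then $\fd_R M\leq\Fwd(R)\leq2$. The theorem already covers arbitrary modules with no finite generation hypothesis, so no further work is needed.

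For (ii)$\Rightarrow$(iii), I invoke Proposition~\ref{prop:syzygy-details} with $n=2$. Condition (i) there implies condition (iii) there, which states that $\Tor_3^R(R/J,M)=0$ for every finitely generated ideal $J$. This is exactly our (iii). Alternatively, one can compute Tor from a flat resolution of length two.

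For (iii)$\Rightarrow$(i), I again use Proposition~\ref{prop:syzygy-details} with $n=2$, now in the direction (iii)$\Rightarrow$(i). Vanishing of $\Tor_3^R(R/J,M)$ for all finitely generated $J$ gives $\fd_R M\leq2<\infty$. The key point is the identification $\Tor_1^R(R/J,K_2)\cong\Tor_3^R(R/J,M)$ for a second syzygy $K_2$, combined with the ideal criterion for flatness. That criterion holds without coherence, so this direction does not even use the Gaussian hypothesis.

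There is no genuine obstacle here. All the substantive content sits in Theorem~\ref{thm:main}, which is used only for (i)$\Rightarrow$(ii). The only point needing care is that the finitistic bound applies to every module, not just finitely presented ones. This is guaranteed by the definition of $\Fwd$ adopted in the paper.
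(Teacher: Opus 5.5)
Your proposal is correct and follows essentially the paper's own argument. Theorem~\ref{thm:main} gives (i)$\Rightarrow$(ii). The remaining implications come from dimension shifting to a second syzygy together with the ideal criterion for flatness (Proposition~\ref{prop:syzygy-details}), which, as you note, needs no coherence and no Gaussian hypothesis.
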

\begin{proof}
Only (iii)$\Rightarrow$(ii) needs explanation. For a second syzygy $L$ of $M$, dimension shifting gives $\Tor_1^R(R/J,L)=0$ for every finitely generated ideal $J$. The ideal criterion for flatness implies that $L$ is flat.
\end{proof}

\begin{corollary}\label{cor:regular}
If $A$ is local Gaussian and every element outside $N=\Nil(A)$ is regular, then $\Fwd(A)\leq2$.
\end{corollary}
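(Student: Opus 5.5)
The corollary is already implied by Theorem~\ref{thm:main}: a local Gaussian ring is Gaussian, and the local part of that theorem gives $\Fwd(A)\leq2$. I will still give a direct route showing what the extra hypothesis buys. It implies that $I=\ker(A\to A_N)$ is zero. Indeed, if $x\in I$ then $sx=0$ for some $s\in S=A\setminus N$, and since $s$ is regular this forces $x=0$. So $A\hookrightarrow Q=A_N$, and by Proposition~\ref{prop:pullback} we get $A=B\cong Q\times_K V$.

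Let $M$ have finite flat dimension. By dimension shifting (Proposition~\ref{prop:syzygy-details}) it suffices to rule out $\fd_A M=3$, so assume $\fd_A M=3$. Proposition~\ref{prop:H} gives $H=T_2(V)\cong T_1(I)=T_1(0)=0$. But Lemma~\ref{lem:detection} produces $a\in\m\setminus N$ with $T_3(V/\bar aV)\ne0$, and hence $H\ne0$ by \eqref{eq:kernelH}. This is a contradiction, so $\fd_A M\leq2$.

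The first localization, the factorization, and the second localization are therefore not needed here. The only point to check is that Lemma~\ref{lem:detection} and Proposition~\ref{prop:H} hold as stated when $I=0$. They do: their proofs use only $\fd_A M=3$, Lemma~\ref{lem:zero} for $Q$, and Proposition~\ref{prop:inputs}(iii).

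An even shorter check is available. When $I=0$, every $s\in S$ satisfies $(0:s)=0$, so by Lemma~\ref{lem:kernel}(i) we have $N=sN$. Hence $V/\bar sV\cong A/sA$ by Lemma~\ref{lem:kernel}(iv), since $D\subseteq I=0$. Here $A/sA$ has flat dimension at most one because $s$ is regular. This contradicts the nonvanishing $T_3(V/\bar aV)\ne0$ from Lemma~\ref{lem:detection}.

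The main obstacle is to confirm that Lemma~\ref{lem:detection} does not itself rely on $I\ne0$. Its argument passes from a module with nonzero $T_3$ to an $S$-torsion cyclic module, and then to some $V/\bar aV$. It uses only $T_3(X_Q)=0$ (from Lemma~\ref{lem:zero}) and the chain structure of $A/D$, so it applies here.
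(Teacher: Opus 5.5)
Your main argument ($I=0$ by regularity, so $H\cong T_1(I)=0$ by Proposition~\ref{prop:H}, contradicting $H\ne0$ from Lemma~\ref{lem:detection}, then dimension shifting) is exactly the paper's proof. In your optional shorter check, the justification ``$D\subseteq I$'' is backwards: Lemma~\ref{lem:kernel}(ii) gives $I\subseteq D$, and $D$ can be nonzero when $I=0$. The conclusion $A/sA\cong V/\bar sV$ still holds, however, because $N=sN\subseteq sA$, as you had just derived.
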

\begin{proof}
There is also a short proof independent of Sections~\ref{sec:normal}--\ref{sec:limit}. In this case $I=0$. If $\fd_A M=3$, Proposition~\ref{prop:H} gives $H=0$, whereas Lemma~\ref{lem:detection} gives $H\ne0$. Dimension shifting finishes the argument.
\end{proof}

\begin{remark}
Theorem~\ref{thm:main} is a bound on finite flat dimensions, not a finite bound on weak global dimension. In particular it is compatible with the infinite weak global dimension of nonreduced Gaussian rings established in \cite{DT}. For reduced Gaussian rings, the stronger weak-global-dimension bound one is known; see \cite{Glaz05}. No claim that the three possible finitistic values are characterized solely by the nilradical is made here.
\end{remark}

\subsection{Locality of the invariant}

The local-to-global passage also gives the following equality, which is useful independently of the Gaussian hypothesis.

\begin{proposition}\label{prop:locality}
For every commutative ring $R$,
\[
 \Fwd(R)=\sup_{\m\in\Max(R)}\Fwd(R_{\m}).
\]
The same supremum may be taken over all prime ideals.
\end{proposition}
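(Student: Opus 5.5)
We prove the two inequalities separately. First, for the inequality $\sup_{\m}\Fwd(R_{\m})\leq\Fwd(R)$, fix a maximal ideal $\m$ and an $R_{\m}$-module $X$ with $\fd_{R_{\m}}X=n<\infty$. Regarded as an $R$-module, $X$ satisfies
\[
 \Tor_i^R(Y,X)\cong\Tor_i^{R_{\m}}(Y_{\m},X)
\]
for every $R$-module $Y$. This follows from Lemma~\ref{lem:localtor}, or from the flat base change $R\to R_{\m}$: a flat $R_{\m}$-resolution of $X$ is also a flat $R$-resolution, since $R_{\m}$ is flat over $R$. Hence $\fd_R X\leq n$. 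Conversely, $\fd_{R_{\m}}X=\fd_{R_{\m}}X_{\m}\leq\fd_R X$ because $X_{\m}=X$. Thus $\fd_R X=n$ is finite, and therefore $n\leq\Fwd(R)$. The same argument works verbatim at any prime ideal.

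For the reverse inequality, put $d=\sup_{\m}\Fwd(R_{\m})$; there is nothing to prove if $d=\infty$. Let $M$ be an $R$-module with $\fd_R M<\infty$. For each maximal ideal, localizing a flat resolution gives $\fd_{R_{\m}}M_{\m}\leq\fd_R M<\infty$, so $\fd_{R_{\m}}M_{\m}\leq d$. Now take a free resolution of $M$ and let $L$ be its $d$th syzygy. Since localization is exact, $L_{\m}$ is a $d$th syzygy of $M_{\m}$ over $R_{\m}$, and Proposition~\ref{prop:syzygy-details}(iv) then shows that $L_{\m}$ is flat. Flatness is a local property: for every finitely generated ideal $J$, Lemma~\ref{lem:localtor} gives $\Tor_1^R(R/J,L)_{\m}\cong\Tor_1^{R_{\m}}(R_{\m}/J_{\m},L_{\m})=0$, so $\Tor_1^R(R/J,L)=0$. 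Hence $L$ is flat and $\fd_R M\leq d$.

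For the version over all primes, the supremum over primes dominates the supremum over maximal ideals, and by the first paragraph it is itself at most $\Fwd(R)$. Combining this with the second paragraph gives equality. Alternatively, one notes that $R_{\p}$ is a localization of $R_{\m}$ for any maximal $\m\supseteq\p$, so the first paragraph applied to $R_{\m}$ gives $\Fwd(R_{\p})\leq\Fwd(R_{\m})$.

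No step is a genuine obstacle. The only point needing care is the first inequality: an $R_{\m}$-module of finite flat dimension over $R_{\m}$ must have the same flat dimension over $R$, so that it really contributes to $\Fwd(R)$. This uses flatness of $R\to R_{\m}$ for the bound $\fd_R X\leq n$, and the identification $X_{\m}=X$ for the reverse bound.
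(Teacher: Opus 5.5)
Your proof is correct and follows essentially the same route as the paper. You identify $\fd_R X=\fd_{R_{\m}}X$ for $R_{\m}$-modules using flatness of $R\to R_{\m}$ and $X_{\m}\cong X$, then show that a $d$th syzygy is flat by checking that $\Tor_1^R(R/J,L)$ vanishes locally, and obtain the prime-ideal version from the two inequalities together with $\Max(R)\subseteq\operatorname{Spec}(R)$.
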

\begin{proof}
First suppose $X$ is an $R_{\p}$-module. A flat $R_{\p}$-module is flat over $R$, because $R_{\p}$ is flat over $R$ and the composite of two exact tensor functors is exact. A flat resolution over $R_{\p}$ is therefore a flat resolution over $R$. Conversely, localization of an $R$-flat resolution of $X$ gives an $R_{\p}$-flat resolution, and $X_{\p}\cong X$. Hence
\[
 \fd_R X=\fd_{R_{\p}}X.
\]
It follows that every local finitistic weak dimension is at most $\Fwd(R)$.

For the converse, if the supremum over maximal ideals is infinite there is nothing to prove. If it is bounded by an integer $d$, let $M$ have finite flat dimension over $R$. Then $\fd_{R_{\m}}M_{\m}\leq d$ for every maximal ideal. A $d$th syzygy $L$ in a free resolution localizes to a flat module everywhere. To see that $L$ is flat, apply localization to $\Tor_1^R(R/J,L)$ for every finitely generated ideal $J$. All its maximal localizations are zero, so the module itself is zero. The ideal criterion now gives flatness and $\fd_R M\leq d$.

A module whose localizations at all maximal ideals vanish is zero: a nonzero element has a proper annihilator contained in a maximal ideal and survives at that ideal. This justifies the vanishing assertion used above. The equality using all primes follows from the two inequalities and the inclusion of maximal ideals among prime ideals.
\end{proof}

\begin{corollary}
A finite product $R=R_1\times\cdots\times R_r$ of nonzero Gaussian rings satisfies
\[
 \Fwd(R)=\max_{1\leq j\leq r}\Fwd(R_j)\leq2.
\]
\end{corollary}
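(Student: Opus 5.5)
The corollary concerns a finite product $R=R_1\times\cdots\times R_r$ of nonzero Gaussian rings. The plan is to deduce it from Proposition~\ref{prop:locality} together with Theorem~\ref{thm:main}, by identifying the local rings of the product with the local rings of the factors. Two facts are needed first.
\begin{enumerate}
\item $R$ is Gaussian. The Gaussian property can be tested at maximal ideals, so this follows from the second fact.
\item Every maximal ideal of $R$ has the form $\m=R_1\times\cdots\times\m_j\times\cdots\times R_r$ with $\m_j\in\Max(R_j)$.
\end{enumerate}
For the second fact I use the orthogonal idempotents $e_1,\ldots,e_r$. Since $e_ie_j=0$ for $i\neq j$, any prime contains all but one of them. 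Localizing at such an $\m$ inverts $e_j$ and kills the other $e_i$. Hence $R_{\m}\cong(R_j)_{\m_j}$, and conversely each $(R_j)_{\m_j}$ arises this way. This is routine.

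Next I apply Proposition~\ref{prop:locality} to $R$ and to each $R_j$. Because the local rings of $R$ are exactly the local rings of the $R_j$, the two suprema agree:
\[
\Fwd(R)=\sup_{\m\in\Max(R)}\Fwd(R_{\m})
=\max_{j}\ \sup_{\m_j\in\Max(R_j)}\Fwd\bigl((R_j)_{\m_j}\bigr)
=\max_j\Fwd(R_j).
\]
Since each $R_j$ is nonzero, $\Max(R_j)$ is nonempty, so every factor actually contributes to the maximum. Each $R_j$ is Gaussian, so Theorem~\ref{thm:main} gives $\Fwd(R_j)\leq2$, and the bound for $\Fwd(R)$ follows.

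The only point needing care is the identification $R_{\m}\cong(R_j)_{\m_j}$. I would check it via $R_{\m}=R[e_j^{-1}]_{\m}$ and $R[e_j^{-1}]\cong R_j$.

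As an alternative that avoids localization, one could note that every $R$-module decomposes as $\bigoplus_j e_jM$, with $\fd_R M=\max_j\fd_{R_j}e_jM$. This holds because flatness and Tor split across the idempotents. That argument gives the same equality directly.

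No step here is a real obstacle.
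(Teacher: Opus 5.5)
Your proof is correct, but your main route differs from the paper's. You go through Proposition~\ref{prop:locality}. You identify the maximal ideals of $R$ with the disjoint union of the $\Max(R_j)$ and check $R_{\m}\cong(R_j)_{\m_j}$ via $R[e_j^{-1}]\cong R_j$. After that, both sides of the equality are the same supremum of local invariants. The nonzero hypothesis enters where you say it does: it guarantees $\Max(R_j)\neq\emptyset$, so the locality formula applies to each factor.

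The paper instead uses what you list as your alternative. It decomposes an arbitrary $R$-module as $M=\bigoplus_j e_jM$ and notes that flat resolutions split along the idempotents, so $\fd_R M=\max_j\fd_{R_j}(e_jM)$. It then takes suprema. It checks Gaussianity of $R$ by computing contents componentwise rather than by testing at maximal ideals.

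The paper's argument is more elementary and self-contained. It needs neither localization nor the syzygy and ideal-criterion argument behind Proposition~\ref{prop:locality}, and it gives the finer module-by-module formula. Yours leans on machinery the paper has already proved and reduces everything to the purely ring-theoretic identification of the local rings of a product. That same identification also gives you the Gaussian property of $R$.

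Neither argument actually needs $R$ itself to be Gaussian for the displayed equality and bound, since the bound comes from Theorem~\ref{thm:main} applied to the factors.
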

\begin{proof}
An $R$-module decomposes as the finite direct sum of its components under the central idempotents of the product. Tensor products and exactness are componentwise, so its flat dimension is the maximum of the flat dimensions of its components. A finite flat dimension occurs precisely when all these dimensions are finite. Taking suprema proves the assertion. Gaussianity of the product follows by the same componentwise interpretation of polynomial contents.
\end{proof}

\section{An explicit sharp example}\label{sec:example}

We give an example together with a direct calculation, rather than obtaining sharpness only from the classification of chain rings in \cite{Couchot12}.

\begin{example}\label{ex:sharp}
Let $k$ be a field, let
\[
 F=\bigcup_{n\geq1}k((t^{1/n!})),
\]
with the compatible $t$-adic valuation $v:F^\times\to\mathbb Q$, and set
\[
 W=\{0\}\cup\{f\in F^\times:v(f)\geq0\},\qquad
 J=\{0\}\cup\{f\in W\setminus\{0\}:v(f)>1\}.
\]
Then $A=W/J$ is a noncoherent local Gaussian total ring of quotients with $\Fwd(A)=2$.
\end{example}
\begin{proof}
The union $F$ is a field with value group $\mathbb Q$, and $W$ is its valuation ring. The quotient $A$ is a chain ring and hence Gaussian. A nonzero element of $A$ has a well-defined value in $\mathbb Q\cap[0,1]$: adding an element of $J$ does not change that value. Its maximal ideal is
\[
 \m=\{0\}\cup\{\bar f:0<v(f)\leq1\}.
\]
Every element of positive value is nilpotent, and elements of value zero are units. Thus $A$ is a total ring of quotients.

We first prove that $\m$ is flat. Since every finitely generated ideal of $A$ is principal, it suffices to show that $(r)\ot_A\m\to\m$ is injective for every $r\in A$. The presentation $(r)\cong A/(0:r)$ identifies its kernel with
\[
 \frac{\{x\in\m:rx=0\}}{(0:r)\m}.
\]
The cases $r=0$ and $r$ a unit are immediate. Otherwise write $v(r)=\beta>0$. If $0\ne x\in\m$ and $rx=0$, put $v(x)=\alpha>0$. Then $\alpha+\beta>1$. Choose rational $\varepsilon$ with
\[
 0<\varepsilon<\min\{\alpha,\alpha+\beta-1\}.
\]
For a representative of $x$, divide by $t^\varepsilon$ in $W$ and call its image $c\in A$. Then
\[
 x=c\,\overline{t^\varepsilon},\qquad
 rc=0,\qquad \overline{t^\varepsilon}\in\m.
\]
Hence $x\in(0:r)\m$, proving flatness.

Put $x=\bar t\ne0$ and $\kk=A/\m$. One has $(0:x)=\m$ and $xA\cong\kk$. The module $\kk$ is not flat: tensoring the inclusion $xA\hookrightarrow A$ with $\kk$ gives a zero map from the nonzero module $xA\ot_A\kk\cong\kk$ to $\kk$. Since $\m$ is flat, $\fd_A\kk=1$.

The sequence
\[
 0\longrightarrow xA\longrightarrow A\longrightarrow A/xA\longrightarrow0
\]
now implies $\fd_A(A/xA)=2$. Indeed, its dimension-shifting isomorphisms identify $\Tor_2^A(-,A/xA)$ with $\Tor_1^A(-,xA)$, which is not identically zero. Theorem~\ref{thm:main} gives the matching upper bound.

Finally $\m$ is not finitely generated: any finite set of nonzero elements has a smallest positive value, whereas $\m$ contains elements of smaller positive value. Since $\m=(0:x)$, the annihilator of an element is not finitely generated; therefore $A$ is not coherent.
\end{proof}

Fields realize finitistic weak dimension zero. A discrete valuation domain realizes the value one: its weak global dimension is one, and a quotient by a nonzero nonunit has flat dimension one. Together with Example~\ref{ex:sharp}, this realizes all values in Corollary~\ref{cor:values}.

\begin{remark}

The strict inequality in the definition $J=\{f:v(f)>1\}$ is essential to the calculation. It leaves the element $x=\bar t$, of value exactly one, nonzero. If $0\ne r\in A$ has value $\beta\in(0,1]$, then
\[
 (0:r)=\{0\}\cup\{a\in A\setminus\{0\}:v(a)>1-\beta\}.
\]
Indeed, a product of nonzero representatives maps to zero exactly when the sum of their values exceeds one. For $\beta=1$, this is the maximal ideal $\m$; for $\beta<1$, it is an open final segment of the value set. By contrast, the nonzero principal ideal generated by an element of value $\gamma$ is the closed segment
\[
 rA=\{0\}\cup\{a\ne0:v(a)\geq\gamma\},\qquad v(r)=\gamma.
\]
This distinction accounts for the non-finite generation of the annihilators in the example. No finite set of positive-valued generators can produce an open segment with no smallest value.

The maximal ideal is idempotent. If $0\ne a\in\m$ has value $\alpha$, divide its representative by $t^{\alpha/2}$ to obtain
\[
 a=\overline{t^{\alpha/2}}\,c,
 \qquad v(c)=\alpha/2>0.
\]
Thus $\m\subseteq\m^2$, and the reverse inclusion is automatic. Consequently
\[
 \Tor_1^A(\kk,\kk)\cong\m/\m^2=0,
\]
although $\kk$ is not flat. This provides a concrete warning that a single residue-field Tor test cannot replace the ideal criterion for flatness over a non-Noetherian local ring. In the main proof, the residue-field test is used only in conjunction with the additional structural and cyclic-module arguments.

There is also an explicit flat resolution of the module realizing dimension two:
\[
 0\longrightarrow\m\longrightarrow A
 \xrightarrow{\;x\;}A\longrightarrow A/xA\longrightarrow0.
\]
Exactness at the first copy of $A$ is the equality $(0:x)=\m$, and exactness at the second is the definition of $xA$. The first term is flat by the valuation calculation, but $xA\cong\kk$ is not flat. Therefore the displayed resolution cannot be shortened to length one. This is a direct homological explanation of sharpness.

The same construction works with any positive rational endpoint $\gamma$ in place of one, taking the ideal $\{f:v(f)>\gamma\}$ and the element $\overline{t^\gamma}$. The flatness argument uses a rational $\varepsilon$ smaller than both the value of the chosen element and its excess above the annihilation threshold. Thus the example is a family of valuation-cut constructions, not a consequence of a special property of the particular exponent one.
\end{remark}

\end{document}